\pgfplotsset{compat=1.18}
\theoremstyle{definition} 
\newtheorem{definition}{Definition}}
\newtheorem{theorem}{Theorem}
\newtheorem{corollary}[theorem]{Corollary}
\newtheorem{lemma}[theorem]{Lemma}
\newtheorem{proposition}[theorem]{Proposition}
\newcommand{\T}{\mathsf{T}}
\renewcommand{\P}{\textsf{P}}
\newcommand{\NP}{\textsf{NP}}
\newcommand{\bigO}[1]{\mathcal{O}\left(#1\right)}
\newcommand{\oPoly}[1]{\ensuremath{\mathcal{O}(\mathsf{poly}( #1 ))}}
\newcommand{\codI}{\ensuremath{\langle I \rangle}}
\newcommand{\oStand}{\oPoly{\codI,\varepsilon^{-1}}}
\newcommand{\face}{\mathsf{face}}
\newcommand{\facets}{\mathsf{facet}}
\newcommand{\conv}{\mathsf{conv}}
\newcommand{\cone}{\mathsf{cone}}
\newcommand{\vertx}{\mathsf{vert}}
\newcommand{\rect}{\mathsf{rect}}
\newcommand{\halfsp}{\mathcal{H}}
\newcommand{\dobs}{d}
\newcommand{\solfeas}{X}
\newcommand{\solob}{Y}
\newcommand{\solnd}{Y_\mathrm{N}}
\newcommand{\minCone}{\mathbb{R}_{\geq0}^d}
\newcommand{\edgi}{Y^{+}}
\newcommand{\appri}{\mathcal{A}}
\newcommand{\apprii}[1]{\appri^{(#1)}}
\newcommand{\cptas}{cPTAS}
\newcommand{\cfptas}{cFPTAS}
\newcommand{\cbothptas}{c(F)PTAS}
\newcommand{\LB}{\mathrm{LB}}
\newcommand{\UB}{\mathrm{UB}}
\newcommand{\subdi}{\mathcal{S}}
\newcommand{\ideal}[1]{{#1}^*}
\newcommand{\WS}{\mathsf{WS}}
\newcommand{\hpO}{\mathsf{hpO}}
\newcommand{\hpOWS}{\hpO\text{-}\WS}
\newcommand{\oaa}{\texttt{OAA}}
\newcommand{\iaa}{\texttt{IAA}}
\newcommand{\opt}{\mathsf{opt}}
\newcommand{\Eapx}{E}
\newcommand{\genCone}{C}
\newcommand{\dist}{\mathrm{dist}}
\definecolor{cborange}{HTML}{E69F00}
\definecolor{cbskyblue}{HTML}{56B4E9}
\definecolor{cbgreen}{HTML}{009E73}
\definecolor{cbyellow}{HTML}{F0E442}
\definecolor{cbdarkblue}{HTML}{0072B2}
\definecolor{cbred}{HTML}{D55E00}
\definecolor{cbpurple}{HTML}{CC79A7}
\definecolor{lightgray}{gray}{0.925}
\newcommand{\yset}{
    \node(y1) at (3.0,3.2) {};
    \node(y2) at (3.1,4.8) {};
    \node(y3) at (3.8,3.5) {};
    \node(y4) at (3.4,4.3) {};
    \node(y5) at (5.0,5.1) {};
    \node(y6) at (2.1,3.8) {};
    \node(y7) at (2.9,1.6) {};
    \node(y8) at (5.5,1.4) {};
    \node(y9) at (1.9,2.3) {};
}
\begin{document}
\title{\vspace{-1cm}A Polynomial-Time Inner Approximation Algorithm for Multi-Objective and Parametric Optimization}

\author{Levin Nemesch\textsuperscript{\Letter}\footnote{RPTU Kaiserslautern-Landau, Department of Mathematics, Paul-Ehrlich-Str. 31, 67663 Kaiserslautern, Germany, email:\{l.nemesch,stefan.ruzika\}@math.rptu.de}, Stefan Ruzika\footnotemark[\value{footnote}]~,
     Clemens Thielen\footnote{Technical University of Munich, Campus Straubing for Biotechnology and Sustainability, Professorship of Optimization and Sustainable Decision Making, Am Essigberg~3, 94315~Straubing, Germany, email:\{clemens.thielen,alina.wittmann\}@tum.de}, Alina Wittmann\footnotemark[\value{footnote}]
}

\maketitle

\begin{abstract}

\singlespacing\small

In multi-objective optimization, computing the entire non-dominated set (also known as the Pareto front or the Pareto frontier) is often intractable.
However, for any multiplicative factor greater than one, an approximation set can be constructed in polynomial time for many problems.
In this paper, we use the concept of convex approximation sets:
Each point in the non-dominated set is approximated by a convex combination of images of solutions in such a set.
Convex approximation sets can be used to efficiently approximate multi-objective optimization problems as well as parametric optimization problems.
Recently, \citet{helfrich.ruzika.ea2024EfficientlyConstructing} presented a convex approximation algorithm that works in an adaptive fashion and runs faster than all previously existing algorithms.
We use a different approach for constructing an even more efficient adaptive algorithm for computing convex approximation sets of multi-objective mixed-integer linear programs.
Our algorithm is based on the skeleton algorithm for polyhedral inner approximation by \citet{csirmaz2021InnerApproximationAlgorithm}.
If the weighted sum scalarization can be solved exactly or approximately in polynomial time, our algorithm can find a convex approximation set for an approximation factor arbitrarily close to this solution quality.
We demonstrate that our new algorithm runs faster than the current state-of-the-art algorithm from \citet{helfrich.ruzika.ea2024EfficientlyConstructing} on instances of the multi-objective variants of the assignment problem, the knapsack problem, and the symmetric metric travelling salesman problem.

\medskip

\noindent
\textbf{Publishing:} This article is published with the same title and authors by the INFORMS Journal on Computing (IJOC), with doi:\;\href{https://doi.org/10.1287/ijoc.2025.1308}{\texttt{10.1287/ijoc.2025.1308}}. Cite the IJOC publication when using this article.

\smallskip

\noindent
\textbf{Keywords:} multiobjective mixed-integer optimization, multiobjective approximation, convex approximation sets, parametric optimization, inner approximation algorithm

\smallskip

\noindent
\textbf{Funding:} This research was funded by the Deutsche Forschungsgemeinschaft (DFG, German Research Foundation) --- Project numbers 508981269 and GRK 2982, 516090167 ``Mathematics of Interdisciplinary Multiobjective Optimization''.

\smallskip

\noindent
\textbf{Repository:} Source code, instances and results for our computational study are provided in a git repository at: \href{https://github.com/INFORMSJoC/2025.1308}{\texttt{github.com/INFORMSJoC/2025.1308}}
\end{abstract}

\maketitle

\section{Introduction}\label{sec::intro}

In multi-objective optimization, the goal is to optimize several conflicting objectives simultaneously. As a result, a solution that performs well in one objective may perform poorly in another, and no single solution is best in all objectives. Instead, the aim is to identify the \emph{non-dominated set} (also called the \emph{Pareto front} or the \emph{Pareto frontier}), which consists of all \emph{non-dominated images} --- objective vectors of feasible solutions for which no other objective vector of a feasible solution is at least as good in every objective and strictly better in at least one. A solution that corresponds to a non-dominated image is called \emph{efficient}. In many cases, one is not only interested in describing the non-dominated set itself but also in finding a set of efficient solutions that map to it.

In this paper, we focus on \emph{multi-objective mixed-integer linear programs} (MOMILPs) with a bounded, non-negative image set.
It is well known that the number of non-dominated images in such problems can be exponential in the problem size \citep{ehrgott2005MulticriteriaOptimization}, or even infinite.
Furthermore, even deciding whether a given image belongs to the non-dominated set can already be \NP-hard \citep{serafini1987ConsiderationsComputationalComplexity}.
Therefore, for many problems, finding polynomial-time algorithms to compute the entire non-dominated set is a futile effort.

\smallskip

An alternative to computing the entire non-dominated set can be found in the concept of \emph{multi-objective approximation}:
Here, a multiplicative approximation factor~$1+\varepsilon$ with $\varepsilon>0$ is given, and the goal is to find a $(1+\varepsilon)$-\emph{approximation set}.
In a $(1+\varepsilon)$-approximation set, every feasible solution is $(1+\varepsilon)$-approximated, i.e., there exists a solution in the approximation set that is worse by at most a factor $(1+\varepsilon)$ in every objective.
A \emph{$(1+\varepsilon)$-approximation algorithm} is an algorithm that computes a $(1+\varepsilon)$-approximation set in polynomial time.
Unlike heuristic methods, approximation algorithms aim not only for computational efficiency but also for a provable guarantee on the quality of the output.

Under rather mild assumptions, a key result was shown in \citet{papadimitriou.yannakakis2000ApproximabilityTradeoffsOptimal}.
For every~$\varepsilon>0$, a multi-objective optimization problem admits a $(1+\varepsilon)$-approximation set of cardinality polynomial in the input and $\nicefrac{1}{\varepsilon}$.
The proof is constructive:
A polynomial-cardinality grid is constructed in~$\mathbb{R}^d$  (where~$d$ denotes the number of objectives) that contains all  images of feasible solutions.
For each cell in this grid, at most one solution needs to be in an approximation set.
Most of the literature on multi-objective approximation uses this grid in some way --- a comprehensive survey is given in \citet{herzel.ruzika.ea2021ApproximationMethodsMultiobjective}.
However, while being a relevant tool for theoretical purposes, this grid is rather inconvenient in practice.
There are two reasons for this.
First, while the cardinality of the grid is polynomial in the problem size, it often vastly overestimates the number of solutions needed in an approximation set \citep{helfrich.ruzika.ea2024EfficientlyConstructing}.
Second, the numerical values that appear in the construction of the grid have rather high encoding lengths, which easily leads to numerical issues in fixed precision arithmetic.
Therefore, finding grid-agnostic approaches is an important step in making multi-objective approximation algorithms viable in practice.

\smallskip

This motivates to look at the relaxed notion of \emph{multi-objective convex approximation}.
First investigated in \citet{diakonikolas.yannakakis2008SuccinctApproximateConvex}, a \emph{(multi-objective) convex approximation set} is a set of solutions such that each feasible image is approximated by a convex combination of images of solutions in the set.
As demonstrated in \citet{helfrich.ruzika.ea2024EfficientlyConstructing}, it is possible to develop convex approximation algorithms that work in an adaptive fashion and avoid many of the aforementioned issues that appear when directly using the grid from \citet{papadimitriou.yannakakis2000ApproximabilityTradeoffsOptimal}.

Convex approximation relaxes the notion of approximation and offers valuable applications.
The two most prominent use cases are the approximation of \emph{parametric optimization problems} (where linear combinations of objectives are considered) and of convex multi-objective optimization problems:
For a parametric optimization problem, every multi-objective convex approximation algorithm is a (parametric) approximation algorithm, even if the underlying problem structure is non-convex or combinatorial~\citep{helfrich.herzel.ea2022ApproximationAlgorithmGeneral}.
For a convex multi-objective optimization problem, a convex approximation set needs far fewer solutions than a conventional approximation set since it can take advantage of the convex problem structure.%

\smallskip

An early convex approximation algorithm that uses so-called \emph{weighted sum scalarizations} is outlined in \citet{papadimitriou.yannakakis2000ApproximabilityTradeoffsOptimal}.
In \citet{diakonikolas2011ApproximationMultiobjectiveOptimization} several advanced convex approximation algorithms are developed.
One of them, the so-called \emph{Chord algorithm} (also subject of \citet{daskalakis.diakonikolas.ea2016HowGoodChord}), can be seen as an approximate variant of the well-known dichotomic approach from bi-objective optimization \citep{aneja.nair1979BicriteriaTransportationProblem}.
Similarly, another adaptation of the dichotomic approach is developed in \citet{bazgan.herzel.ea2022ApproximationAlgorithmGeneral} for the approximation of parametric optimization problems, but it can also be used to compute convex approximation sets.

Based on a parametric approximation algorithm from \citet{helfrich.herzel.ea2022ApproximationAlgorithmGeneral},  a generic convex approximation algorithm that generalizes the so-called \emph{dual Benson algorithm} from multi-objective optimization is developed in \citet{helfrich.ruzika.ea2024EfficientlyConstructing}.
This algorithm works in an adaptive fashion and interacts only with selected points from a special grid.
While the construction of this grid is somewhat similar to that of the grid from \citet{papadimitriou.yannakakis2000ApproximabilityTradeoffsOptimal}, it partitions the so-called \emph{weight-space} instead.
Although the worst-case running time of the new algorithm is worse than that of the purely grid-based algorithm from \citet{helfrich.herzel.ea2022ApproximationAlgorithmGeneral}, it runs much faster in a computational study.
This highlights the practical viability of approaches that minimize interactions with the grid from \citet{papadimitriou.yannakakis2000ApproximabilityTradeoffsOptimal} or similar grid constructions, even if the worst-case running time worsens because of this.

\smallskip

We build upon this result and provide a new algorithm for convex approximation sets that works completely grid-agnostic.
We do this by adapting a generic \emph{inner approximation} method from exact multi-objective optimization.
Note that the term ``approximation'' in the context of inner approximation methods differs from how ``approximation'' is used above.
An \emph{inner approximation polyhedron} denotes a polyhedron that is contained in the so-called \emph{Edgeworth-Pareto hull} of the non-dominated images \citep{csirmaz2021InnerApproximationAlgorithm}.
However, no guarantee concerning an approximation factor such as $1+\varepsilon$ is ensured.

The most well-known algorithms for computing inner (and, also, outer) approximation polyhedra are those from the ``Benson family''.
The first outer approximation algorithm for \emph{multi-objective linear programs} (MOLPs) is introduced in \citet{benson1998OuterApproximationAlgorithm}.
On the basis of geometric duality for MOLPs \citep{heyde.lohne2008GeometricDualityMultiple}, a dual variant of this algorithm is developed in \citet{ehrgott.lohne.ea2012DualVariantBenson}.
Due to geometric duality, this algorithm can be seen as an inner approximation method \citep{csirmaz2021InnerApproximationAlgorithm}.
Improved variants for Benson's algorithm and the dual Benson algorithm are presented in \citet{hamel.lohne.ea2014BensonTypeAlgorithms} and in \citet{bokler2018OutputsensitiveComplexityMultiobjective}, and some are implemented and compared in a computational study in \citet{lohne.weissing2017VectorLinearProgram}.
Variants that use an absolute notion of approximation to speed up practical running times are described in literature (for an overview, see \citet{lohne.rudloff.ea2014PrimalDualApproximation}), but do not ensure a polynomial worst-case running time.
Both \citet{bokler.mutzel2015OutputSensitiveAlgorithmsEnumerating} and \citet{borndorfer.schenker.ea2016PolySCIP} independently show how to apply the dual Benson algorithm to \emph{multi-objective integer linear programs} (MOILPs), and \citet{bokler.nemesch.ea2023PaMILOSolverMultiobjective} extend this to also include MOMILPs.

The relation between inner and outer approximation algorithms is studied in \citet{csirmaz2021InnerApproximationAlgorithm},
and skeleton algorithms for both are presented.
These skeleton algorithms are parametrized by the choice of so-called \emph{separating oracles}.
The outer approximation skeleton algorithm is used in \citet{bokler.parragh.ea2024OuterApproximationAlgorithm} to develop the first outer approximation algorithm for MOMILPs.
Independently, an algorithm for MOMILPs that fits into the inner approximation skeleton algorithm is presented in \citet{przybylski.klamroth.ea2019SimpleEfficientDichotomic}.

\subsection{Our Contribution}\label{sec::intro::contribution}

Our central contribution is to combine the generic inner approximation algorithm from \citet{csirmaz2021InnerApproximationAlgorithm} with the concept of multi-objective convex approximation.
Our algorithm can also be seen as a descendant of the well-known dual Benson algorithm \citep{ehrgott.lohne.ea2012DualVariantBenson}.
It computes convex approximation sets adaptively without accessing the grid from \citet{papadimitriou.yannakakis2000ApproximabilityTradeoffsOptimal}.
This sets it apart from most, if not all, previous convex approximation algorithms.
These algorithms are either limited in the number of objectives \citep{diakonikolas2011ApproximationMultiobjectiveOptimization,daskalakis.diakonikolas.ea2016HowGoodChord,bazgan.herzel.ea2022ApproximationAlgorithmGeneral}, use a grid in the so-called \emph{weight space} \citep{papadimitriou.yannakakis2000ApproximabilityTradeoffsOptimal,helfrich.herzel.ea2022ApproximationAlgorithmGeneral}, or need to compute at least some vertices of such a grid \citep{helfrich.ruzika.ea2024EfficientlyConstructing}.
The assumptions we impose on the considered problems are rather weak and match those used in most other literature on multi-objective approximation \citep{herzel.ruzika.ea2021ApproximationMethodsMultiobjective}.
We only describe our algorithm for the minimization case, but it can also be used for the maximization case.
In Appendix~\ref{sec::innerapx::generalization}, we briefly outline an even more general setting where the algorithm is applicable.

We conduct a computational study to evaluate the performance of our algorithm and compare it to the state-of-the-art algorithm from \citet{helfrich.ruzika.ea2024EfficientlyConstructing}.
The results show that, although our algorithm has a higher worst-case running time in theory, it runs much faster than the algorithm from~\citet{helfrich.ruzika.ea2024EfficientlyConstructing}.

\section{Preliminaries}\label{sec::prelims}

For $n\in\mathbb{N}_{>0}$, we write $[n]\coloneqq \{1,\ldots,n\}$.
As a shorthand, we write \oPoly{x} to indicate that a polynomial bound in~$x$ exists.
We use the notation $\langle x\rangle$ to denote the binary encoding length of an object~$x$, where~$x$ can be a problem instance or a mathematical object (e.g.\ a number, a vector, or a matrix).
Throughout this paper, addition of sets is always done via the Minkowski sum:
For $S_1,S_2\subseteq\mathbb{R}^n$, we write $S_1+S_2\coloneqq \{s_1+s_2: s_1\in S_1,s_2\in S_2\}$.
When comparing two vectors $u,v\in\mathbb{R}^n$, $u\leq v $ ($u\geq v$) denotes that $u_i\leq v_i$ ($u_i\geq v_i$) for all $i\in[i]$.
For a set $S\subseteq\mathbb{R}^n$, $\conv(S)$ denotes its \emph{convex hull} and $\cone(S)$ denotes its \emph{conic hull}.

\smallskip

We now recall some basic concepts for polyhedra that are used throughout this paper.
For an in-depth introduction, we refer to \citet{ziegler1995LecturesPolytopes} and \citet{grotschel.lovasz.ea1988RationalPolyhedra}.

For $w\in\mathbb{R}^{n}\setminus \{{0}\}$ and $c\in\mathbb{R}$, the set $\left\{z \in\mathbb{R}^{n} : w^\T z\geq c\right\}$ is called a \emph{halfspace}, and its boundary $\left\{z \in\mathbb{R}^{n} : w^\T z= c\right\}$ is called a \emph{hyperplane}.
A \emph{polyhedron} is the intersection of a finite number of halfspaces.
A \emph{face}~$F$ of a polyhedron~$\mathcal{P}$ is either~$\mathcal{P}$ itself, or a set of the form $F=\mathcal{P}\cap h$, where~$h$ is a hyperplane that intersects only the boundary of $\mathcal{P}$.
If $F$ has dimension~$0$ it is called a \emph{vertex} of~$\mathcal{P}$, and if it has one dimension less than~$\mathcal{P}$ it is called a \emph{facet} of~$\mathcal{P}$.
For~$\mathcal{P}$, a halfspace~$H$ is \emph{valid} if $\mathcal{P}\subseteq H$, and \emph{supporting} if it is valid and its boundary intersects~$\mathcal{P}$.
A halfspace is \emph{facet-supporting} if its boundary contains a facet.
The set $\vertx(\mathcal{P})$ denotes the  set of vertices of the polyhedron~$\mathcal{P}$, the set $\face(\mathcal{P})$ denotes its faces, and the set $\facets(\mathcal{P})$ its facets.
With $\halfsp(\mathcal{P})$ we denote the set of facet-supporting halfspaces.
The \emph{recession cone} of~$\mathcal{P}$ is the set $\left\{r\in\mathbb{R}^n: \forall p\in\mathcal{P},a\geq0: p+ar\in\mathcal{P}\right\}$.

A polyhedron that is represented as an intersection of halfspaces is called a polyhedron in \emph{$\mathcal{H}$-representation}.
Alternatively, a polyhedron can be represented in a \emph{$\mathcal{V}$-representation}:
For finite sets $S_1,S_2\subseteq\mathbb R^{n}$, $\mathcal{P}=\conv{(S_1)}+\cone{(S_2)}$ defines a polyhedron.
Every convex polyhedron can be represented in an $\mathcal{H}$-representation and in a $\mathcal{V}$-representation.
A polyhedron is called \emph{rational} if there exists a representation in which all appearing numbers are in~$\mathbb{Q}$.
Generating one type of representation from the other is the goal of the well-known \emph{vertex} and \emph{facet enumeration} problems.
A vertex enumeration generates a $\mathcal{V}$-representation from an $\mathcal{H}$-representation, a facet enumeration an $\mathcal{H}$-representation from a $\mathcal{V}$-representation.

\smallskip

Next, we introduce the relevant concepts from multi-objective optimization.
For $\dobs,m,n\in\mathbb{N}_{>0}$, let $Q\in\mathbb{Q}^{d\times n}$, $A\in\mathbb{Q}^{m\times n}$, $b\in\mathbb{Q}^m$, and $n_\mathbb{R},n_\mathbb{Z}\in\mathbb{N}_{\geq0}$ with $n_\mathbb{R}+n_\mathbb{Z}=n$.
Then, a \emph{multi-objective mixed-integer linear program} (MOMILP) is a problem of the form
\[
\setlength\arraycolsep{1pt}
\begin{array}{lrl}
    \min\hspace*{10pt}  & Qx \\
    \mathrm{s.t.} & Ax & \geq b ,\\
    & \multicolumn{2}{r}{x \in\mathbb{R}^{n_\mathbb{R}}\times\mathbb{Z}^{n_\mathbb{Z}}.}
\end{array}
\]
As shorthands, $f(x)\coloneqq Qx$ denotes the \emph{objective function}, and $\solfeas\subseteq\mathbb{R}^n$ denotes the set of \emph{feasible solutions}.
For a feasible solution~$x$, we call the point~$f(x)\in\mathbb{R}^d$ its \emph{image}, and denote the set of all images by $\solob \coloneqq \left\{f(x) : x\in\solfeas\right\}\subseteq\mathbb{R}^d$.
Each component~$f_i$ of~$f$ (with $i\in[\dobs]$) is called an \emph{objective}.
The $\dobs$-dimensional space containing the images is called the \emph{objective space}.
A MOMILP is called a \emph{multi-objective linear program (MOLP)} if $n_\mathbb{Z}=0$, and a \emph{multi-objective integer linear program (MOILP)} if $n_\mathbb{R}=0$.
Note that our algorithm does not require a problem to be explicitly encoded as above.
For example, a shortest-path problem is a MOMILP as it can be written as such, but it might be encoded as a graph with edge costs instead.

To allow the existence of a convex approximation set for a MOMILP, it is assumed that $\solob\subseteq\mathbb{R}_{\geq0}^d$ \citep{papadimitriou.yannakakis2000ApproximabilityTradeoffsOptimal}.
Furthermore, the number~$d$ of objectives is assumed to be a constant and not part of the input.
These assumptions are regularly used in the literature on multi-objective (convex) approximation \citep{herzel.ruzika.ea2021ApproximationMethodsMultiobjective}.
We also assume that $\solfeas\neq\emptyset$.

For two points $u,v\in \mathbb{R}^{\dobs}$, $u$ \emph{dominates}~$v$ if $v\in \{u\}+\minCone$ and $u\neq v$.
In this setting, $\minCone$ is also known as the \emph{domination cone}.
An image~$y\in\solob$ is called \emph{non-dominated} if there is no $y'\in\solob$ that dominates~$y$.
The \emph{non-dominated set}~$\solnd$ is the set of all non-dominated images.
A solution~$x\in\solfeas$ is called \emph{efficient} (or \emph{Pareto-optimal}) if $f(x)\in\solnd$.
Commonly, the goal of a multi-objective optimization problem is to find a set of efficient solutions so that the corresponding images constitute~$\solnd$.

The \emph{Edgeworth-Pareto hull} of a MOMILP is the set $\edgi\coloneqq\conv\left(\solob\right)+\minCone$.
It is a rational polyhedron, and the encoding length of each vertex of~$\edgi$ is in $\oPoly{\codI}$ \citep{bokler.parragh.ea2024OuterApproximationAlgorithm}.
Hence, all facet-supporting halfspaces can also be encoded in polynomial length \citep{grotschel.lovasz.ea1988RationalPolyhedra}.
Furthermore, for each vertex~$v$ of~$\edgi$, there is a solution~$x\in\solfeas$ with polynomial encoding length such that $f(x)=v$ \citep{bokler.parragh.ea2024OuterApproximationAlgorithm}.
A solution whose image is on the boundary of~$\edgi$ is called a \emph{(weakly) supported solution}, and a solution whose image is a vertex of~$\edgi$ is called an \emph{extreme supported solution}.
For ease of notation, an image~$f(x)$ is also called (extreme) supported if~$x$ is (extreme) supported.

\subsection{Multi-Objective Approximation}\label{sec::prelims::apx}

We briefly introduce the concept of multi-objective approximation, a comprehensive survey is given in \citet{herzel.ruzika.ea2021ApproximationMethodsMultiobjective}.
From now on, we assume that a MOMILP instance~$I$ and an $\varepsilon\in\mathbb{R}_{>0}$ are given if not stated otherwise.

\begin{definition}
    For two points $u,v\in\mathbb{R}_{\geq0}^d$, $u$ \emph{$(1+\varepsilon)$-approximates}~$v$ if $u \leq (1+\varepsilon)\cdot v$.
    Similar, for two solutions $x,x'\in\solfeas$, we say that~$x$ \emph{$(1+\varepsilon)$-approximates}~$x'$ if $f(x)\leq (1+\varepsilon) \cdot f(x')$.
\end{definition}

\begin{definition}
    A set $R\subseteq\solfeas$ is called a \emph{$(1+\varepsilon)$-approximation set} if, for each~$x\in\solfeas$, there exists a solution~$x^*\in R$ with $f(x^*)\leq(1+\varepsilon)\cdot f(x)$.
\end{definition}

In \citet{papadimitriou.yannakakis2000ApproximabilityTradeoffsOptimal} it is shown that, under rather mild assumptions, there always exists a $(1+\varepsilon)$-approximation set with cardinality in \oStand.
A relaxed notion of multi-objective approximation is introduced in \Citet{diakonikolas.yannakakis2008SuccinctApproximateConvex}:
\begin{definition}\label{def::prelims::convexApx}
    A set $R\subseteq\solfeas$ is called a \emph{$(1+\varepsilon)$-convex-approximation set} if, for each~$x\in\solfeas$, there is a $v\in\conv\left\{f(x^*) : x^*\in R\right\}$ such that $v\leq(1+\varepsilon)\cdot f(x)$.
\end{definition}

Note that the point~$v$ in Definition~\ref{def::prelims::convexApx} is not necessarily an image in~$Y$.

\begin{definition}
    A \emph{$(1+\varepsilon)$-convex approximation algorithm} is an algorithm that takes as input a MOMILP instance~$I$ and outputs a $(1+\varepsilon)$-convex approximation set.
    Its running time must be bounded in \oPoly{\codI}.
\end{definition}

The concept of \emph{approximation schemes} is defined analogously to the case of single-objective optimization (see \citet{vazirani2001ApproximationAlgorithms}).

\begin{definition}
    A \emph{multi-objective convex approximation scheme} is an algorithm that takes as input a MOMILP instance~$I$ and an $\varepsilon\in\mathbb{Q}_{>0}$, and outputs a $(1+\varepsilon)$-convex approximation set.
    It is called a \emph{multi-objective convex polynomial-time approximation scheme (\cptas)} if the running time for every fixed $\varepsilon$ is in \oPoly{\codI}.
    Furthermore, it is called a  \emph{multi-objective convex fully polynomial-time approximation scheme (\cfptas)} if the running time for every~$\varepsilon$ is in $\oPoly{\langle I\rangle,\varepsilon^{-1}}$.
\end{definition}

In the case of a single-objective optimization problem, the multi-objective notion of (convex) approximation is equivalent to the conventional notion of approximation.
PTAS and FPTAS then denote traditional single-objective approximation schemes.

\smallskip

The problem of constructing a $(1+\varepsilon)$-convex approximation set can also be interpreted as a polyhedral approximation problem.
For the Edgeworth-Pareto hull~$\edgi$, we define the polyhedron $\edgi_{1+\varepsilon}\coloneq\left\{(1+\varepsilon) v : v\in\edgi\right\}$.
It can be easily seen that $\minCone$ is the recession cone of $\edgi_{1+\varepsilon}$.
The polyhedron $\edgi_{1+\varepsilon}$ motivates a different characterization of a $(1+\varepsilon)$-convex approximation sets.
Similar to the construction of the Edgeworth-Pareto hull, a set of solutions $R\subseteq\solfeas$ can be used to determine a polyhedron
$\appri\coloneqq\conv\left\{f(x) : x\in R\right\}+\minCone$
in objective space.
If $\appri\subseteq\edgi_{1+\varepsilon}$ holds, then~$R$ must be a $(1+\varepsilon)$-convex approximation set.
This is a key insight that motivates our algorithm in Section~\ref{sec::innerapx}.
Proposition~\ref{proposition::prelims::edgiInAppri} formalizes the concept:

\begin{proposition}\label{proposition::prelims::edgiInAppri}
    Let $R\subseteq\solfeas$ be finite and let~$\appri$ be the resulting polyhedron in objective space.
    $R$ is a $(1+\varepsilon)$-convex approximation set for~$I$ if and only if $\edgi_{1+\varepsilon}\subseteq\appri$.
\end{proposition}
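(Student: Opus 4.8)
The plan is to reduce the whole statement to the single equivalence that, for each $x\in\solfeas$, the scaled image $(1+\varepsilon)f(x)$ lies in $\appri$. First I would unpack the Minkowski-sum structure of $\appri$: by definition $\appri=\conv\{f(x):x\in R\}+\minCone$, so a point $p$ belongs to $\appri$ exactly when $p=v+r$ for some $v\in\conv\{f(x):x\in R\}$ and some $r\in\minCone$, i.e.\ exactly when there is a convex combination $v$ of images of solutions in $R$ with $v\le p$ (since $r=p-v\in\minCone$ is equivalent to $v\le p$). Applying this with $p=(1+\varepsilon)f(x)$ shows that the defining condition of a $(1+\varepsilon)$-convex approximation set (Definition~\ref{def::prelims::convexApx}) is equivalent to requiring $(1+\varepsilon)f(x)\in\appri$ for every $x\in\solfeas$. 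This is the one genuinely content-bearing step: it translates the componentwise inequality $v\le(1+\varepsilon)f(x)$ into set membership.

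Second, I would rewrite $\edgi_{1+\varepsilon}$ in a $\mathcal{V}$-type form. Since $\edgi=\conv(\solob)+\minCone$ and scaling distributes over the Minkowski sum, $\edgi_{1+\varepsilon}=(1+\varepsilon)\conv(\solob)+(1+\varepsilon)\minCone$. Because $\minCone$ is a cone and $1+\varepsilon>0$, we have $(1+\varepsilon)\minCone=\minCone$, and because scaling commutes with taking convex hulls, $(1+\varepsilon)\conv(\solob)=\conv\{(1+\varepsilon)f(x):x\in\solfeas\}$. Hence $\edgi_{1+\varepsilon}=\conv\{(1+\varepsilon)f(x):x\in\solfeas\}+\minCone$.

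With these two reformulations, both directions follow. For ``$\Leftarrow$'', if $\edgi_{1+\varepsilon}\subseteq\appri$, then every individual scaled image $(1+\varepsilon)f(x)$ lies in $\edgi_{1+\varepsilon}\subseteq\appri$, which by the first step yields that $R$ is a $(1+\varepsilon)$-convex approximation set. For ``$\Rightarrow$'', suppose $R$ is a $(1+\varepsilon)$-convex approximation set, so $(1+\varepsilon)f(x)\in\appri$ for every $x\in\solfeas$. I would then use that $\appri$ is convex (a convex hull plus the convex cone $\minCone$) and invariant under adding $\minCone$, i.e.\ $\appri+\minCone=\appri$ because $\minCone+\minCone=\minCone$. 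Convexity gives $\conv\{(1+\varepsilon)f(x):x\in\solfeas\}\subseteq\appri$, and invariance under $+\minCone$ upgrades this to $\conv\{(1+\varepsilon)f(x):x\in\solfeas\}+\minCone\subseteq\appri$, which by the second step is exactly $\edgi_{1+\varepsilon}\subseteq\appri$.

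The main obstacle is only the first step: one must check that the convex combination $v$ in Definition~\ref{def::prelims::convexApx} ranges over images of solutions in $R$ --- precisely the generators of $\appri$ --- so that ``$v\le(1+\varepsilon)f(x)$ for some such $v$'' and ``$(1+\varepsilon)f(x)\in\appri$'' really coincide; everything else is routine set algebra. In particular, no closedness or topological argument is needed, and the finiteness of $R$ enters only to make $\appri$ a genuine polyhedron, playing no role in the equivalence itself.
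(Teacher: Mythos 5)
Your proof is correct and follows essentially the same route as the paper: both hinge on translating the componentwise condition $v\le(1+\varepsilon)f(x)$ of Definition~\ref{def::prelims::convexApx} into the membership $(1+\varepsilon)f(x)\in\appri$, and the backward direction is identical. The only difference is presentational --- the paper proves the forward direction by decomposing an arbitrary point of $\edgi_{1+\varepsilon}$ as a convex combination of scaled images plus a vector of $\minCone$, whereas you package the same content as the identity $\edgi_{1+\varepsilon}=\conv\{(1+\varepsilon)f(x):x\in\solfeas\}+\minCone$ combined with convexity and recession-cone invariance of $\appri$.
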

\begin{proof}{Proof}
    First, assume that~$R$ is a $(1+\varepsilon)$-convex approximation set.
    For every point $v_{1+\varepsilon}\in\edgi_{1+\varepsilon}$, there must be a point~$v$ in $\edgi$ such that $v_{1+\varepsilon}=(1+\varepsilon) v$.
    As~$\edgi$ is a polyhedron, there are images $y_{1},\ldots,y_{k}\in Y$, scalar values $\ell_1,\ldots,\ell_{k}\geq0$ with $\sum_{i=1}^{k}\ell_i=1$, and a $w\in \minCone$ such that $v=\sum_{i=1}^{k}\ell_i y_i+w$.
    Hence,~$v_{1+\varepsilon}$ can be written as $(1+\varepsilon) v=\sum_{i=1}^m\ell_i (1+\varepsilon) y_i+(1+\varepsilon) w$.

    Since~$R$ is a convex approximation set, $(1+\varepsilon) y_i$ must be in $\conv\left\{f(x) : x\in R\right\}+\minCone=\appri$ for every $i\in[k]$.
    Furthermore, as~$\appri$ is convex, every convex combination of $y_{1},\ldots,y_{k}$ must be in~$\appri$, too.
    The vector~$(1+\varepsilon) w$ is in~$\minCone$.
    As a consequence, the point~$v_{1+\varepsilon}$ must be in $\appri+\minCone=\appri$.

    Conversely, assume that $\edgi_{1+\varepsilon}\subseteq\appri$.
    Then, for each point~$v\in\edgi$, the point~$(1+\varepsilon) v$ is in~$\edgi_{1+\varepsilon}$ and, thus, contained in~$\appri$.
    This includes all images, as $Y\subseteq\edgi$.
    Hence, for each~$y\in \solob$, we have $(1+\varepsilon) y\in\appri=\conv\left\{f(x) : x\in R\right\}+\minCone$.
    By Definition~\ref{def::prelims::convexApx}, $R$ is a convex approximation set.
\end{proof}

In \citet{papadimitriou.yannakakis2000ApproximabilityTradeoffsOptimal}, a special grid is constructed to show the existence of $(1+\varepsilon)$-approximation sets of polynomial cardinality for each~$\varepsilon\in\mathbb{Q}_{>0}$.
We make use of this grid in Lemma~\ref{lemma::innerapx::onceperS} in our running time analysis in Section~\ref{sec::innerapx}.
Note that we tailor its construction to our use case and slightly deviate from the original construction in~\cite{papadimitriou.yannakakis2000ApproximabilityTradeoffsOptimal}.

The grid consists of a set of \emph{hyperrectangles}.
A \emph{hyperrectangle} is a set of the form $\rect(u,v)\coloneqq\left\{x\in\mathbb{R}^n : u\leq x \leq v\right\}$ for two points $u,v\in\mathbb{R}^n$ with $u\leq v$.
We call the point $u$ the \emph{minimal vertex} of $\rect(u,v)$.

Let $V\subseteq\mathbb{Q}_{\geq0}^\dobs$ be a finite subset of images from $\solob$, and let~$p$ be a bound on the encoding length of each image~$y\in V$.
The idea of the grid from \citet{papadimitriou.yannakakis2000ApproximabilityTradeoffsOptimal} is to construct a set of hyperrectangles so that each image from~$V$ is contained in at least one hyperrectangle, and so that one image per hyperrectangle is sufficient to $(1+\varepsilon)$-approximate all other images contained in the same hyperrectangle.

The first step in constructing this set is to find some bounds on the objective values that can appear in each image~$y\in V$.
Because of the encoding length bound~$p$,
each value~$v_{i\in[\dobs]}$ must either be exactly~$0$ or be between the lower bound $\LB=2^{-p}$ and the upper bound $\UB=2^p$.
Let $\kappa\coloneq\max\left\{k\in\mathbb{N}: \LB\cdot(1+\varepsilon)^k<\UB\right\}$.
Then, the following set of hyperrectangles spans the grid:

\begin{definition}\label{def::prelims::grid}
We define the set $\subdi$ of hyperrectangles as
    \[
        \subdi\coloneq\left\{
        \rect\left(s,(1+\varepsilon)s\right)\subset\mathbb{R}^d: \forall i\in[\dobs]\enspace s_i = 0 \vee s_i=\LB(1+\varepsilon)^k \text{ for } k=0,\ldots,\kappa
        \right\}.
    \]
\end{definition}

As shown in \citet{papadimitriou.yannakakis2000ApproximabilityTradeoffsOptimal}, each image from~$V$ is contained in at least one hyperrectangle of~$\subdi$ and the cardinality of~$\subdi$ is in $\bigO{{\left(\varepsilon^{-1}\log{\frac{UB}{LB}}\right)}^{\dobs}}\subseteq\bigO{{(\varepsilon^{-1} p)}^{\dobs}}$.
The number of hyperrectangles that contain a non-dominated image is further bounded by $\bigO{{(\varepsilon^{-1} p)}^{\dobs-1}}$ \citep{vassilvitskii.yannakakis2005EfficientlyComputingSuccinct}.
We note that the same reasoning can be applied to bound the number of supported images, but omit a formal proof here.
An additional property of $\subdi$ is needed in this paper:

\begin{lemma}\label{lemma::grid::alphaprops}
    For each $S\in\subdi$, let~$\ideal{s}$ be its minimial vertex. Then, for all $s\in S$ and $w\in \minCone$, it holds that $\ideal{s}\leq s \leq (1+\varepsilon) \ideal{s}$ and $w^\T \ideal{s}\leq w^\T s \leq w^\T \left((1+\varepsilon)\ideal{s}\right)$.
\end{lemma}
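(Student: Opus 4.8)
The plan is to read both inequality chains almost directly off the definition of the hyperrectangle, and then to transfer the first chain to the inner products by exploiting the nonnegativity of $w$. By the construction in Definition~\ref{def::prelims::grid}, the set $S$ has the form $S=\rect(\ideal{s},(1+\varepsilon)\ideal{s})$, so $\ideal{s}$ is precisely its lower corner (its minimal vertex) and $(1+\varepsilon)\ideal{s}$ its upper corner; note that $\ideal{s}\geq 0$ and $\varepsilon>0$ guarantee $\ideal{s}\leq(1+\varepsilon)\ideal{s}$, so that $S$ is a well-defined hyperrectangle. First I would observe that, by the very definition $\rect(u,v)=\{x:u\leq x\leq v\}$, membership $s\in S$ means nothing other than $\ideal{s}\leq s\leq(1+\varepsilon)\ideal{s}$ holding componentwise. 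This already establishes the first claim: it is simply a restatement of what it means for $s$ to lie in $S$.

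For the second claim, I would fix an arbitrary $w\in\minCone$, i.e. $w_i\geq 0$ for every $i\in[\dobs]$. The componentwise inequalities $\ideal{s}_i\leq s_i\leq(1+\varepsilon)\ideal{s}_i$ obtained in the first part may be multiplied coordinatewise by the nonnegative scalars $w_i$ without reversing their direction, giving $w_i\ideal{s}_i\leq w_i s_i\leq w_i(1+\varepsilon)\ideal{s}_i$ for each $i$. Summing these over $i\in[\dobs]$ and recognizing each of the three sums as an inner product yields $w^\T\ideal{s}\leq w^\T s\leq w^\T((1+\varepsilon)\ideal{s})$, which is exactly the desired conclusion.

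The only step that requires any care — and the sole ``obstacle'', such as it is — is the monotonicity argument: the map $z\mapsto w^\T z$ preserves the componentwise order only because the hypothesis $w\in\minCone$ forces every weight $w_i$ to be nonnegative, so that multiplying a valid inequality by $w_i$ cannot flip its sign. This is precisely where $w\in\minCone$ enters; everything else is a direct unwinding of the definition of $\rect$ and of the minimal vertex $\ideal{s}$.
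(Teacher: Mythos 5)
Your proof is correct and follows essentially the same route as the paper: the first chain is read directly off the definition of $\rect$ and the construction of $\subdi$, and the second is obtained by multiplying the componentwise inequalities by the nonnegative weights $w_i$ and summing. No gaps; your write-up is just a more explicit version of the paper's two-line argument.
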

\begin{proof}{Proof}
    The first property follows directly from the definition of a hyperrectangle and the construction of~$\subdi$.
    Then, for each~$i\in[\dobs]$, it holds that $w_i\geq 0$ and, thus, $w_i\ideal{s}_i\leq w_i s_i \leq w_i\left((1+\varepsilon)\ideal{s}_i\right)$.
\end{proof}

\section{The Inner Approximation Algorithm}\label{sec::innerapx}

In this section, assume that a MOMILP instance $I$ and some $\varepsilon\in\mathbb{R}_{>0}$ are given (if not stated otherwise).
Our algorithm is based on the generic inner approximation algorithm described in \citet{csirmaz2021InnerApproximationAlgorithm}.
We first outline the original exact algorithm and then present how we adapt it for convex approximation.
Note that we slightly change the description of the original algorithm from the one provided in \citet{csirmaz2021InnerApproximationAlgorithm} to tailor it to our use case.
A central component of the algorithm is the following oracle:

\begin{definition}\label{def::planeseporacle}
    A \emph{(hyper-)plane separating oracle} $\hpO$ for a MOMILP instance~$I$ is a black box algorithm that takes as input a halfspace $H=\left\{z\in\mathbb{R}^{\dobs} : w^\T z\geq c\right\}$ with $w\in\minCone$ and returns a tuple that is either:
        \begin{itemize}[noitemsep]
            \item $\left(\textit{inside}, \emptyset\right)$ if it holds that $f(x)\in H$ for all $x\in\solfeas$, or
            \item $\left(\textit{not inside}, x\right)$, where $x\in\solfeas$ with $\langle x\rangle\in\oPoly{\langle I \rangle}$ and $f(x)\notin H$.
        \end{itemize}
\end{definition}

For ease of use, we say in the first case that a plane separating oracle returns no solution and, in the second case, that it returns the solution~$x$.
Compared to \citet{csirmaz2021InnerApproximationAlgorithm}, our plane separating oracles are defined slightly different.
First, we require bounds on the encoding length of the returned solutions.
Second, in \citet{csirmaz2021InnerApproximationAlgorithm}, every returned solution must be supported or extreme supported.
For our algorithm, the weaker condition from Definition~\ref{def::planeseporacle} is already sufficient.
We discuss the implications of using stronger oracles at the end of this section.

\smallskip

The goal of the inner approximation algorithm from \citet{csirmaz2021InnerApproximationAlgorithm} is to find $\edgi$.
This is done by starting with an initial inner approximation polyhedron that is contained in $\edgi$ and then iteratively refining this polyhedron until it coincides with $\edgi$.
The initial polyhedron~$\apprii{1}$ is constructed by picking a solution $x^\mathrm{init}\in\solfeas$ and setting $\apprii{1}\coloneq f(x^\mathrm{init})+\mathbb{R}_{\geq0}^d$.
For ease of use, we assume that this initial solution is given as part of the input.
After $\apprii{1}$ is constructed, the algorithm works iteratively, beginning with $k=1$.
In iteration $k$, all facet-supporting halfspaces of $\apprii{k}$ are computed via facet enumeration.
For each halfspace $H\in\mathcal{H}(\apprii{k})$, the plane separating oracle is called.
Halfspaces for which the oracle has already been called in a previous iteration can be skipped.
If, for any halfspace, the oracle returns a solution~$x\in\solfeas$, the inner approximation polyhedron is updated to $\apprii{k+1}\coloneq \conv\left(\{f(x)\} \cup \apprii{k}\right)+\minCone$, $k$ is incremented, and the next iteration begins.
Due to its construction, $\apprii{k}\subseteq\edgi$ holds in each iteration.
If the oracle returns no solution for all halfspaces of $\apprii{k}$, then it also holds that $\edgi\subseteq\apprii{k}$ and the algorithm terminates.
The final approximation polyhedron is denoted by~$\appri$.

A detailed analysis of the correctness and running time of the exact inner approximation algorithm with stronger plane separating oracle is given in \citet{csirmaz2021InnerApproximationAlgorithm}. We refrain from restating it here.

\smallskip

Adapting the inner approximation algorithm for convex approximation is accomplished through a straightforward modification.
In each iteration, instead of calling the plane separating oracle with the facet-supporting halfspaces directly, each halfspace is modified first.
Let $H=\left\{z\in\mathbb{R}^{\dobs} : w^\T z\geq c\right\}$ be the halfspace that is currently investigated by the algorithm.
Instead of calling the plane separating oracle for~$H$, it is called for the halfspace $H_{1+\varepsilon}\coloneqq\left\{z\in\mathbb{R}^{\dobs} : (1+\varepsilon)w^\T z\geq c\right\}$.
The idea is to simulate a call for~$H$ to a plane separating oracle of the polyhedron~$\edgi_{1+\varepsilon}$ instead of~$\edgi$.
To demonstrate why this modification can be seen as such an oracle, the inequality defining $H_{1+\varepsilon}$ can be rewritten into $w^\T((1+\varepsilon) z)\geq c$.
A solution can now only be returned if, for its image $y$, it holds that $(1+\varepsilon)y\notin H$.
As a consequence, the algorithm terminates as soon as $\edgi_{1+\varepsilon}\subseteq\apprii{k}$ instead of  $\edgi\subseteq\apprii{k}$.
The new algorithm is formally stated in Algorithm~\ref{algo::innerapx}.
Figure~\ref{fig::innerapx::run} illustrates Algorithm~\ref{algo::innerapx} on a small example.

\begin{algorithm}
    \caption{Inner approximation algorithm for $(1+\varepsilon)$-convex approximation, derived from the exact inner approximation algorithm in~\cite{csirmaz2021InnerApproximationAlgorithm}.
    The oracle $\hpO$ and the value $\varepsilon$ parameterize the algorithm and are not considered part of the input.}\label{algo::innerapx}
    \begin{algorithmic}[1]
    \Require{}MOMILP instance~$I$ and an arbitrary feasible solution $x^\mathrm{init}\in\solfeas$ (with encoding length in \oPoly{\langle I \rangle})
    \Ensure{}$R$ is a $(1+\varepsilon)$-convex approximation set and $\edgi_{1+\varepsilon}\subseteq\appri\subseteq\edgi$
    \State{} $R \gets \left\{x^\mathrm{init}\right\}$, $k \gets 1$, $F \gets \emptyset$
    \State{} $\apprii{1}\gets \left\{f(x^\mathrm{init})\right\}+\minCone$
    \While{$\halfsp\left(\apprii{k}\right)\setminus F\neq \emptyset$}\label{algo::innerapx::loopstart}
    \State{} $H\gets$ arbitrary halfspace $\{z\in\mathbb{R}^\dobs: w^\T z\geq c\}\in\halfsp\left(\apprii{k}\right)\setminus F$
    \State{} $H_{1+\varepsilon}\gets\{z\in\mathbb{R}^d: (1+\varepsilon)w^\T z\geq c\}$
    \State{} $\left(\textit{status},x^*\right)\gets\hpO(H_{1+\varepsilon})$
    \If{$\textit{status}=\textit{not inside}$}
        \State{} $R\gets R\cup\{x^*\}$, $k\gets k+1$
        \State{} $\apprii{k}\gets\conv\left\{f(x) : x\in R\right\}+\minCone$
    \Else{}
        \State{} $F\gets F\cup\{H\}$
    \EndIf{}
    \EndWhile{}
    \State{} $\appri\gets\apprii{k}$
    \end{algorithmic}
\end{algorithm}

\begin{figure}
    \caption{
        Example for a run of Algorithm~\ref{algo::innerapx}.
    }\label{fig::innerapx::run}
    \centering
    \begin{minipage}[t]{0.31\textwidth}
        \centering
        \begin{tikzpicture}[scale=0.53]

            \def\xbound{7.3}
            \def\ybound{6.9}
            \def\epsshift{1.3}

            \draw[->, thick] (0,0)--(\xbound,0) node[pos=0.5, below]{$f_1$};
            \draw[->, thick] (0,0)--(0,\ybound) node[pos=0.5, left]{$f_2$};

            \yset

            \foreach \i in {1,...,9} {
                \node (yeps\i) at ($(0,0)!\epsshift!(y\i)$){};
            }

            \foreach \i in {1,...,9} {
                \draw[cbred!80, thick, shorten >= 4pt, ->] (y\i.center) to ($(0,0)!\epsshift!(y\i)$.center);
                \draw[draw=none,fill=cbred!80, thick, ->] ($(0,0)!\epsshift!(y\i)$) circle[radius=4pt];

                \draw[fill=black!20] (y\i) circle[radius=5pt];
            }

            \node[black,inner sep=0, below right = 0 pt and -6.5 pt of y3] () {\scriptsize $y$};
            \node[cbred,inner sep=0, below right = -1 pt and -6.5 pt of yeps3] () {\scriptsize $(1+\varepsilon) y$};

        \end{tikzpicture}
        \subcaption*{The set of images of a MOMILP and the position of the images shifted by factor $(1+\varepsilon)$}
    \end{minipage}\hfill
    \begin{minipage}[t]{0.31\textwidth}
        \centering
        \begin{tikzpicture}[scale=0.53]

            \def\xbound{7.3}
            \def\ybound{6.9}
            \def\epsshift{1.3}

            \draw[->, thick] (0,0)--(\xbound,0) node[pos=0.5, below]{$f_1$};
            \draw[->, thick] (0,0)--(0,\ybound) node[pos=0.5, left]{$f_2$};

            \yset

            \node (yupper) at (3.8,\ybound) {};
            \node (yright) at (\xbound,3.5) {};
            \node (ynadir) at (\xbound,\ybound) {};

            \draw[draw=none, fill=cbskyblue!20,] (yupper.center) to (y3.center)to (yright.center) to (ynadir.center) to cycle;

            \draw[draw=cbskyblue!60, thick,arrows = {Stealth[length=10pt, inset=2pt]-Stealth[length=10pt, inset=2pt]}] (yupper.center) to (y3.center) to (yright.center);

            \foreach \i in {1,...,9} {
                \draw[fill=black!20] (y\i) circle[radius=5pt];
                }

            \draw[fill=cbskyblue!100] (y3) circle[radius=5pt];

            \node[anchor=south west,color=cbdarkblue!]() at (5.5,4.5) {$\apprii{1}$};

            \node[black,inner sep=0, below right = -1 pt and -8 pt of y3] () {\scriptsize $f(x^\mathrm{init})$};

        \end{tikzpicture}
        \subcaption*{$\apprii{1}$ is constructed from the image of the initial solution}
    \end{minipage}\hfill
    \begin{minipage}[t]{0.31\textwidth}
        \centering
        \begin{tikzpicture}[scale=0.53]

            \def\xbound{7.3}
            \def\ybound{6.9}
            \def\epsshift{1.3}

            \draw[->, thick] (0,0)--(\xbound,0) node[pos=0.5, below]{$f_1$};
            \draw[->, thick] (0,0)--(0,\ybound) node[pos=0.5, left]{$f_2$};

            \yset

            \node (yupper) at (3.8,\ybound) {};
            \node (yright) at (\xbound,3.5) {};
            \node (ynadir) at (\xbound,\ybound) {};

            \draw[draw=none, fill=cbskyblue!20,] (yupper.center) to (y3.center)to (yright.center) to (ynadir.center) to cycle;

            \draw[draw=cbskyblue!60, thick,arrows = {Stealth[length=10pt, inset=2pt]-Stealth[length=10pt, inset=2pt]}] (yupper.center) to (y3.center) to (yright.center);

            \foreach \i in {1,...,9} {
                \draw[fill=black!20] (y\i) circle[radius=5pt];
            }

            \draw[draw=cbgreen!100, dashed, ultra thick] ($(yupper)!0.14!(y3)$) to ($(yupper)!2.0!(y3)$);

            \node (mid) at ($(yupper)!1.35!(y3)$) {};
            \node[cbgreen!100,inner sep=0, above right = 0 pt and -2 pt of mid] () {$H$};

            \draw[draw=cbgreen!100, thick, arrows=-{Latex[width=6pt,length=8pt]}] (mid.center) to ($(mid)!1.0! -90:(y3)$);

            \draw[fill=cbskyblue!100] (y3) circle[radius=5pt];

            \draw[cbred!80, thick, shorten >= 4pt, ->] (y6.center) to ($(0,0)!\epsshift!(y6)$.center);
            \draw[draw=none,fill=cbred!80, thick, ->] ($(0,0)!\epsshift!(y6)$) circle[radius=4pt];

            \draw[fill=cbyellow!90] (y6) circle[radius=5pt];
            \node[black,inner sep=0, below right = 0 pt and -15 pt of y6] () {\scriptsize $f(x^*)$};

            \node[anchor=south west,color=cbdarkblue!]() at (5.5,4.5) {$\apprii{1}$};

        \end{tikzpicture}
        \subcaption*{A halfspace $H$ is chosen and $\hpO(H_{1+\varepsilon})$ identifies a solution $x^*$ with $(1+\varepsilon) f(x^*)\notin H$}
    \end{minipage}

    \vspace{1cm} 

    \begin{minipage}[t]{0.31\textwidth}
        \centering
        \begin{tikzpicture}[scale=0.53]

            \def\xbound{7.3}
            \def\ybound{6.9}
            \def\epsshift{1.3}

            \draw[->, thick] (0,0)--(\xbound,0) node[pos=0.5, below]{$f_1$};
            \draw[->, thick] (0,0)--(0,\ybound) node[pos=0.5, left]{$f_2$};

            \yset

            \node (yupper) at (2.1,\ybound) {};
            \node (yright) at (\xbound,3.5) {};
            \node (ynadir) at (\xbound,\ybound) {};

            \draw[draw=none, fill=cbskyblue!20,] (yupper.center) to (y6.center) to (y3.center)to (yright.center) to (ynadir.center) to cycle;

            \draw[draw=cbskyblue!60, thick,arrows = {Stealth[length=10pt, inset=2pt]-Stealth[length=10pt, inset=2pt]}] (yupper.center) to (y6.center) to (y3.center) to (yright.center);

            \foreach \i in {1,...,9} {
                \draw[fill=black!20] (y\i) circle[radius=5pt];
            }

            \draw[fill=cbskyblue!100] (y3) circle[radius=5pt];
            \draw[fill=cbskyblue!100] (y6) circle[radius=5pt];

            \node[anchor=south west,color=cbdarkblue!]() at (5.5,4.5) {$\apprii{2}$};

        \end{tikzpicture}
        \subcaption*{$x^*$ is added to $R$ and $\apprii{1}$ is updated with $f(x^*)$, resulting in $\apprii{2}$}
    \end{minipage}\hfill
    \begin{minipage}[t]{0.31\textwidth}
        \centering
        \begin{tikzpicture}[scale=0.53]

            \def\xbound{7.3}
            \def\ybound{6.9}
            \def\epsshift{1.3}

            \draw[->, thick] (0,0)--(\xbound,0) node[pos=0.5, below]{$f_1$};
            \draw[->, thick] (0,0)--(0,\ybound) node[pos=0.5, left]{$f_2$};

            \yset

            \foreach \i in {1,...,9} {
                \node (yeps\i) at ($(0,0)!\epsshift!(y\i)$){};
            }

            \node (yupper) at (2.1,\ybound) {};
            \node (yright) at (\xbound,3.5) {};
            \node (ynadir) at (\xbound,\ybound) {};

            \draw[draw=none, fill=cbskyblue!20,] (yupper.center) to (y6.center) to (y3.center)to (yright.center) to (ynadir.center) to cycle;

            \draw[draw=cbskyblue!60, thick,arrows = {Stealth[length=10pt, inset=2pt]-Stealth[length=10pt, inset=2pt]}] (yupper.center) to (y6.center) to (y3.center) to (yright.center);

            \foreach \i in {1,...,9} {
                \draw[cbred!80, thick, shorten >= 4pt, ->] (y\i.center) to ($(0,0)!\epsshift!(y\i)$.center);
                \draw[draw=none,fill=cbred!80, thick, ->] ($(0,0)!\epsshift!(y\i)$) circle[radius=4pt];

                \draw[fill=black!20] (y\i) circle[radius=5pt];
            }

            \draw[draw=cbgreen!100, dashed, ultra thick] ($(yupper)!0.12!(y6)$) to ($(yupper)!2.2!(y6)$);

            \node (mid) at ($(yupper)!1.5!(y6)$) {};

            \draw[draw=cbgreen!100, thick, arrows=-{Latex[width=6pt,length=8pt]}] (mid.center) to ($(mid)!0.75! -90:(y6)$);

            \draw[fill=cbskyblue!100] (y3) circle[radius=5pt];
            \draw[fill=cbskyblue!100] (y6) circle[radius=5pt];

            \node[anchor=south west,color=cbdarkblue!]() at (5.5,4.5) {$\apprii{2}$};

            \node[cbgreen!100,inner sep=0, below right = 0 pt and -4 pt of mid] () {$H$};

        \end{tikzpicture}
        \subcaption*{The next halfspace $H$ is chosen, this time $\hpO(H_{1+\varepsilon})$ returns that all images are approximated by $H$}
    \end{minipage}\hfill
    \begin{minipage}[t]{0.31\textwidth}
        \centering
        \begin{tikzpicture}[scale=0.53]

            \def\xbound{7.3}
            \def\ybound{6.9}
            \def\epsshift{1.3}

            \draw[->, thick] (0,0)--(\xbound,0) node[pos=0.5, below]{$f_1$};
            \draw[->, thick] (0,0)--(0,\ybound) node[pos=0.5, left]{$f_2$};

            \yset

            \node (yupper) at (2.1,\ybound) {};
            \node (yright) at (\xbound,3.5) {};
            \node (ynadir) at (\xbound,\ybound) {};

            \draw[draw=none, fill=cbskyblue!20,] (yupper.center) to (y6.center) to (y3.center)to (yright.center) to (ynadir.center) to cycle;

            \draw[draw=cbskyblue!60, thick,arrows = {Stealth[length=10pt, inset=2pt]-Stealth[length=10pt, inset=2pt]}] (yupper.center) to (y6.center) to (y3.center) to (yright.center);

            \foreach \i in {1,...,9} {
                \draw[fill=black!20] (y\i) circle[radius=5pt];
            }

            \draw[draw=cbgreen!100, dashed, ultra thick] ($(y6)!-1!(y3)$) to ($(y6)!2.9!(y3)$);

            \node (mid) at ($(y6)!1.7!(y3)$) {};

            \draw[draw=cbgreen!100, thick, arrows=-{Latex[width=6pt,length=8pt]}] (mid.center) to ($(mid)!0.9! -90:(y3)$);

            \draw[cbred!80, thick, shorten >= 4pt, ->] (y7.center) to ($(0,0)!\epsshift!(y7)$.center);
            \draw[draw=none,fill=cbred!80, thick, ->] ($(0,0)!\epsshift!(y7)$) circle[radius=4pt];

            \draw[fill=cbskyblue!100] (y3) circle[radius=5pt];
            \draw[fill=cbskyblue!100] (y6) circle[radius=5pt];
            \draw[fill=cbyellow!90] (y7) circle[radius=5pt];

            \node[cbgreen!100,inner sep=0, above right = -3 pt and -1 pt of mid] () {$H$};
            \node[black,inner sep=0, below right = 0 pt and -15 pt of y7] () {\scriptsize $f(x^*)$};

            \node[anchor=south west,color=cbdarkblue!]() at (5.5,4.5) {$\apprii{2}$};

        \end{tikzpicture}
        \subcaption*{The next halfspace $H$ is enumerated and solution $x^*$ returned by $\hpO(H_{1+\varepsilon})$}
    \end{minipage}

    \vspace{1cm} 

    \begin{minipage}[t]{0.31\textwidth}
        \centering
        \begin{tikzpicture}[scale=0.53]

            \def\xbound{7.3}
            \def\ybound{6.9}
            \def\epsshift{1.3}

            \draw[->, thick] (0,0)--(\xbound,0) node[pos=0.5, below]{$f_1$};
            \draw[->, thick] (0,0)--(0,\ybound) node[pos=0.5, left]{$f_2$};

            \yset

            \node (yupper) at (2.1,\ybound) {};
            \node (yright) at (\xbound,1.6) {};
            \node (ynadir) at (\xbound,\ybound) {};

            \draw[draw=none, fill=cbskyblue!20,] (yupper.center) to (y6.center) to (y7.center)to (yright.center) to (ynadir.center) to cycle;

            \draw[draw=cbskyblue!60, thick,arrows = {Stealth[length=10pt, inset=2pt]-Stealth[length=10pt, inset=2pt]}] (yupper.center) to (y6.center) to (y7.center) to (yright.center);

            \foreach \i in {1,...,9} {
                \draw[fill=black!20] (y\i) circle[radius=5pt];
            }

            \draw[fill=cbskyblue!100] (y3) circle[radius=5pt];
            \draw[fill=cbskyblue!100] (y6) circle[radius=5pt];
            \draw[fill=cbskyblue!100] (y7) circle[radius=5pt];

            \node[anchor=south west,color=cbdarkblue!]() at (5.5,4.5) {$\apprii{3}$};

        \end{tikzpicture}
        \subcaption*{$R$ and $\apprii{2}$ are updated again}
    \end{minipage}\hfill
    \begin{minipage}[t]{0.31\textwidth}
        \centering
        \begin{tikzpicture}[scale=0.53]

            \def\xbound{7.3}
            \def\ybound{6.9}
            \def\epsshift{1.3}

            \draw[->, thick] (0,0)--(\xbound,0) node[pos=0.5, below]{$f_1$};
            \draw[->, thick] (0,0)--(0,\ybound) node[pos=0.5, left]{$f_2$};

            \yset

            \foreach \i in {1,...,9} {
                \node (yeps\i) at ($(0,0)!\epsshift!(y\i)$){};
            }

            \node (yupper) at (2.1,\ybound) {};
            \node (yright) at (\xbound,1.6) {};
            \node (ynadir) at (\xbound,\ybound) {};

            \draw[draw=none, fill=cbskyblue!20,] (yupper.center) to (y6.center) to (y7.center)to (yright.center) to (ynadir.center) to cycle;

            \draw[draw=cbskyblue!60, thick,arrows = {Stealth[length=10pt, inset=2pt]-Stealth[length=10pt, inset=2pt]}] (yupper.center) to (y6.center) to (y7.center) to (yright.center);

            \draw[draw=cbgreen!100, dashed, ultra thick] ($(y6)!-1.3!(y7)$) to ($(y6)!1.65!(y7)$);

            \node (mid1) at ($(y6)!0.7!(y7)$) {};

            \draw[draw=cbgreen!100, thick, arrows=-{Latex[width=6pt,length=8pt]}] (mid1.center) to ($(mid1)!1.6! 90:(y7)$);

            \draw[draw=cbgreen!100, dashed, ultra thick] ($(y7)!-0.6!(yright)$) to ($(y7)!0.87!(yright)$);

            \node (mid2) at ($(y7)!0.45!(yright)$) {};

            \draw[draw=cbgreen!100, thick, arrows=-{Latex[width=6pt,length=8pt]}] (mid2.center) to ($(mid2)!0.5! -90:(y7)$);

            \foreach \i in {1,...,9} {
                \draw[cbred!80, thick, shorten >= 4pt, ->] (y\i.center) to ($(0,0)!\epsshift!(y\i)$.center);
                \draw[draw=none,fill=cbred!80, thick, ->] ($(0,0)!\epsshift!(y\i)$) circle[radius=4pt];

                \draw[fill=black!20] (y\i) circle[radius=5pt];
            }

            \draw[fill=cbskyblue!100] (y3) circle[radius=5pt];
            \draw[fill=cbskyblue!100] (y6) circle[radius=5pt];
            \draw[fill=cbskyblue!100] (y7) circle[radius=5pt];

            \node[anchor=south west,color=cbdarkblue!]() at (5.5,4.5) {$\apprii{3}$};

        \end{tikzpicture}
        \subcaption*{$\hpO$ is called for the remaining halfspaces in the next two iterations, both approximate all images}
    \end{minipage}\hfill
    \begin{minipage}[t]{0.31\textwidth}
        \centering
        \begin{tikzpicture}[scale=0.53]

            \def\xbound{7.3}
            \def\ybound{6.9}
            \def\epsshift{1.3}

            \draw[->, thick] (0,0)--(\xbound,0) node[pos=0.5, below]{$f_1$};
            \draw[->, thick] (0,0)--(0,\ybound) node[pos=0.5, left]{$f_2$};

            \yset

            \node (yupper) at (2.1,\ybound) {};
            \node (yright) at (\xbound,1.6) {};
            \node (ynadir) at (\xbound,\ybound) {};

            \draw[draw=none, fill=cbskyblue!20,] (yupper.center) to (y6.center) to (y7.center)to (yright.center) to (ynadir.center) to cycle;

            \draw[draw=cbskyblue!60, thick,arrows = {Stealth[length=10pt, inset=2pt]-Stealth[length=10pt, inset=2pt]}] (yupper.center) to (y6.center) to (y7.center) to (yright.center);


            \draw[fill=cbskyblue!100] (y3) circle[radius=5pt];
            \draw[fill=cbskyblue!100] (y6) circle[radius=5pt];
            \draw[fill=cbskyblue!100] (y7) circle[radius=5pt];

            \node[anchor=south west,color=cbdarkblue!]() at (5.5,4.5) {$\appri$};

        \end{tikzpicture}
        \subcaption*{No unchecked halfspace (and, thus, facet of $\appri$) remains, $R$ and $\appri$ are final}
    \end{minipage}
\end{figure}


We now show that Algorithm~\ref{algo::innerapx} is a $(1+\varepsilon)$-convex approximation algorithm if an initial solution can be provided and if each call to the plane separating oracle has running time in $\oPoly{\langle I, H \rangle}$ for every halfspace $H=\{z\in\mathbb{R}^d : w^\T z\geq c\}$ with $w\in\mathbb{Q}_{\geq0}^\dobs$.
This is done in several steps.
In Theorem~\ref{theorem::innerapx::Risconvexapx}, we show that the set~$R$ is a $(1+\varepsilon)$-convex approximation set.
Then, we prove some auxiliary results to show in Theorem~\ref{theorem::innerapx::runningTime} that Algorithm~\ref{algo::innerapx} runs in polynomial time (under the aforementioned condition to the oracle).
Afterwards, we shortly discuss how to obtain an initial solution.

\begin{theorem}\label{theorem::innerapx::Risconvexapx}
    The set~$R$ computed by Algorithm~\ref{algo::innerapx} is a $(1+\varepsilon)$-convex approximation set.
\end{theorem}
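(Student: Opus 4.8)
The plan is to apply Proposition~\ref{proposition::prelims::edgiInAppri}, which reduces the claim to the purely geometric containment $\edgi_{1+\varepsilon} \subseteq \appri$ for the polyhedron $\appri = \conv\{f(x) : x \in R\} + \minCone$ produced at termination (whose existence I assume here, deferring termination itself to the running-time analysis). Since $\appri$ contains a translate of $\minCone$, it is full-dimensional, so I would write it as the intersection of its facet-supporting halfspaces, $\appri = \bigcap_{H \in \halfsp(\appri)} H$, and then verify the inclusion one halfspace at a time.

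Two observations set up the argument. First, because the recession cone of $\appri$ is $\minCone$, every facet-supporting halfspace $H = \{z : w^\T z \geq c\} \in \halfsp(\appri)$ has normal $w \in \minCone$; otherwise some unit ray $e_i \in \minCone$ would eventually violate validity of $H$. Second, the while-loop condition forces $\halfsp(\appri) \subseteq F$ at termination, and $F$ only ever collects halfspaces for which $\hpO$ answered \emph{inside}. The key point is that an \emph{inside} answer for $H_{1+\varepsilon}$ asserts $(1+\varepsilon)w^\T f(x) \geq c$ for all $x \in \solfeas$, equivalently $(1+\varepsilon)y \in H$ for every image $y \in \solob$; this is a property of the instance and of $H$ alone, so it still holds for the final $\appri$ even if $H$ entered $F$ in an earlier iteration when $\apprii{k}$ was smaller.

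The containment itself is then a short computation. Fixing $z \in \edgi_{1+\varepsilon}$ and any $H = \{z' : w^\T z' \geq c\} \in \halfsp(\appri)$, I would write $z = (1+\varepsilon)v$ with $v = \sum_i \ell_i y_i + u$ for images $y_i \in \solob$, convex coefficients $\ell_i$, and $u \in \minCone$, exactly as in the proof of Proposition~\ref{proposition::prelims::edgiInAppri}. Then $w^\T z = \sum_i \ell_i\, w^\T\!\big((1+\varepsilon)y_i\big) + (1+\varepsilon)\,w^\T u \geq c$, using the \emph{inside} inequality on each term $w^\T\!\big((1+\varepsilon)y_i\big) \geq c$ together with $\sum_i \ell_i = 1$, and $w^\T u \geq 0$ since $w, u \in \minCone$. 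Hence $z \in H$ for every facet-supporting halfspace, so $z \in \appri$ and $\edgi_{1+\varepsilon} \subseteq \appri$. I expect the only real obstacle to be the bookkeeping point above: making rigorous that $H \in F$ certifies the \emph{inside} inequality for the \emph{final} $\appri$, even though $H$ may have been tested against a coarser intermediate polyhedron. The remaining ingredients --- the recession-cone structure and the convex decomposition --- are routine.
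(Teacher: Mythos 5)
Your proof is correct and follows essentially the same route as the paper: both reduce the claim via Proposition~\ref{proposition::prelims::edgiInAppri} to the containment $\edgi_{1+\varepsilon}\subseteq\appri$ and derive it from the fact that, at termination, the oracle has answered \emph{inside} for the modified halfspace of every facet of~$\appri$. Your halfspace-by-halfspace computation and the remark that an \emph{inside} answer recorded in~$F$ depends only on the instance and the halfspace (not on the intermediate polyhedron) simply make explicit two steps the paper leaves implicit.
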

\begin{proof}{Proof}
    At termination, the plane separating oracle has been called with the modified halfspace for every facet of~$\appri$ and returned no solution.
    Thus, for every image~$y$, the point $(1+\varepsilon) y$ is on the positive side of every halfspace that is part of the $\mathcal{H}$-representation of~$\appri$ (see Figure~\ref{fig::innerapx::run}).
    Since $(1+\varepsilon) y\in\appri$ for all $y\in \solob$, it must also hold that $\edgi_{1+\varepsilon}\subseteq\appri$.
    Applying Proposition~\ref{proposition::prelims::edgiInAppri} then shows that~$R$ is a $(1+\varepsilon)$-convex approximation set.
\end{proof}

Next, we prove several auxiliary results that lead to a polynomial running time bound.
To ensure that each single operation outside the oracle calls can be performed in polynomial time, we show that at each point in the algorithm, $\apprii{k}$ is a rational polyhedron where both its vertices and its facets can be encoded with bit length in \oPoly{\langle I \rangle}:

\begin{lemma}\label{lemma::innerapx::polyencoding}
    For each polyhedron $\apprii{k}$, all vertices and all facet-supporting halfspaces have encoding length in~\oPoly{\langle I \rangle}.
\end{lemma}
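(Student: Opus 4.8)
The plan is to bound the encoding lengths in two stages: first for the vertices of~$\apprii{k}$, and then to transfer this bound to the facet-supporting halfspaces by a standard result on rational polyhedra, crucially exploiting that the number~$d$ of objectives is a fixed constant.

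First, I would observe that the set~$R$ maintained by Algorithm~\ref{algo::innerapx} always consists only of the initial solution~$x^\mathrm{init}$ and solutions returned by calls to the plane separating oracle. By assumption, $x^\mathrm{init}$ has encoding length in~\oPoly{\codI}, and by Definition~\ref{def::planeseporacle} every solution returned by~$\hpO$ has encoding length in~\oPoly{\codI} as well. Since $f(x)=Qx$ with~$Q$ part of the instance~$I$, each image~$f(x)$ for $x\in R$ is a rational vector whose encoding length is polynomial in $\langle x\rangle$ and $\langle Q\rangle$, hence in~\oPoly{\codI}. Thus all generators of the polytope part $\conv\{f(x):x\in R\}$ of~$\apprii{k}$ have polynomial encoding length, while the generators $e_1,\dots,e_d$ of its recession cone~$\minCone$ have constant encoding length. (No induction over~$k$ is needed here: the argument applies uniformly to the current~$R$ at every~$k$.)

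Next, I would argue that every vertex of~$\apprii{k}$ already lies in $\{f(x):x\in R\}$, so that taking the Minkowski sum of the polytope with the cone~$\minCone$ creates no new vertices. Indeed, a vertex~$v$ of~$\apprii{k}$ is the unique minimizer of some functional $w^\T z$ over~$\apprii{k}$, and validity over the recession cone forces $w\geq 0$; hence $\min_{z\in\apprii{k}} w^\T z = \min_{p}w^\T p + \min_{c\in\minCone}w^\T c$, where the second term is~$0$ attained at $c=0$, so~$v$ is a vertex of $\conv\{f(x):x\in R\}$ and therefore one of the points~$f(x)$. Consequently every vertex of~$\apprii{k}$ has encoding length in~\oPoly{\codI}, settling the claim for vertices.

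Finally, to handle the facets I would invoke the standard fact \citep{grotschel.lovasz.ea1988RationalPolyhedra} that for a rational polyhedron presented by generating vertices and extreme rays, each facet-defining inequality can be chosen with encoding length polynomial in the maximum per-generator encoding length and in the dimension~$d$. Concretely, a facet of~$\apprii{k}$ is a $(d-1)$-dimensional face, hence affinely spanned by~$d$ affinely independent generators, each either a vertex of~$\apprii{k}$ or one of the rays~$e_i$; by Cramer's rule the coefficients of the hyperplane through these~$d$ generators are ratios of determinants of $d\times d$ matrices whose entries come from those generators, and such determinants have encoding length polynomial in the per-generator encoding length and in~$d$. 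The main point to get right — and the only genuinely delicate step — is that this bound depends solely on the encoding length of an \emph{individual} generator and on~$d$, and not on the total number~$|R|$ of generators, which may be large. Since~$d$ is a fixed constant and each generator has encoding length in~\oPoly{\codI}, every facet-supporting halfspace of~$\apprii{k}$ has encoding length in~\oPoly{\codI}, which completes the proof.
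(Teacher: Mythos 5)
Your proposal is correct and follows essentially the same route as the paper's own proof: the vertices of~$\apprii{k}$ are among the images $f(x)$ for $x\in R$, which have encoding length in \oPoly{\codI} by the assumption on $x^\mathrm{init}$ and Definition~\ref{def::planeseporacle}, and the facet bound then follows from the standard result on rational polyhedra in \citet{grotschel.lovasz.ea1988RationalPolyhedra}. You simply spell out two steps the paper leaves implicit — that the Minkowski sum with $\minCone$ creates no new vertices, and that the facet bound depends only on the per-generator encoding length and the constant dimension~$d$ — both of which are correct.
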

\begin{proof}{Proof}
    By construction, each~$\apprii{k}$ is a polyhedron in $\mathcal{V}$-representation.
    All vertices of~$\apprii{k}$ are images of solutions returned by the plane separating oracle in previous iterations.
    As such, their encoding length is in \oPoly{\langle I \rangle}.

    Thus, $\apprii{k}$ is a rational polyhedron, and the encoding length of each facet is bounded by a polynomial in the maximum encoding length among the vertices \citep{grotschel.lovasz.ea1988RationalPolyhedra}.
\end{proof}

Due to Lemma~\ref{lemma::innerapx::polyencoding}, the cost of every elementary arithmetic operation is bounded in $\oPoly{\langle I\rangle}$.
For simplicity, we now assume such costs to be in $\bigO{1}$.
This allows us to state a running time for Algorithm~\ref{algo::innerapx} that only depends on the number of solutions that are returned and the running time of the plane separating oracle.

\begin{lemma}\label{lemma::innerapx::outputRunningTime}
    The running time of Algorithm~\ref{algo::innerapx} is in $\mathcal{O}\left(\left|R\right|^{\left\lfloor\frac{d}{2}\right\rfloor+1}\left(\left\lfloor\frac{d}{2}\right\rfloor\log\left|R\right|+ T_{\hpO}\right)\right)$, where~$\left|R\right|$ denotes the cardinality of the set~$R$ at the time of termination and~$T_{\hpO}$ is an upper bound on the running time of~$\hpO$.
\end{lemma}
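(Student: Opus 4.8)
The plan is to split the total running time into the time spent inside oracle calls and the time spent on everything else (facet enumeration, the membership bookkeeping with $F$, and the polyhedron updates), and to bound each of these two contributions separately. Throughout, I would invoke Lemma~\ref{lemma::innerapx::polyencoding}, so that every elementary arithmetic operation may be charged as $\mathcal{O}(1)$ and the cost is governed purely by combinatorial quantities.

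First I would count the relevant repetitions. The set $R$ starts with the single solution $x^\mathrm{init}$ and is enlarged by exactly one element each time the oracle returns \emph{not inside}; hence over the whole run there are $|R|-1$ such updates, and the inner approximation passes through the $|R|$ distinct instances $\apprii{1},\dots,\apprii{|R|}$, with a fresh facet enumeration needed only when the polyhedron actually changes. Each $\apprii{k}=\conv\{f(x):x\in R\}+\minCone$ has at most $|R|$ vertices, so by the Upper Bound Theorem (see \citet{ziegler1995LecturesPolytopes}) it has $\mathcal{O}(|R|^{\lfloor d/2\rfloor})$ facets, where the fixed recession cone $\minCone$ needs some care because $\apprii{k}$ is unbounded.

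The key counting step is then to bound the total number of oracle calls. During the processing of a single instance $\apprii{k}$, the oracle is queried on distinct facet-supporting halfspaces of $\apprii{k}$, hence at most $|\facets(\apprii{k})|$ times; summing over the $|R|$ instances yields $\sum_{k}\mathcal{O}(|R|^{\lfloor d/2\rfloor})=\mathcal{O}(|R|^{\lfloor d/2\rfloor+1})$ oracle calls in total. As a sanity check, a halfspace is in fact examined only once over the entire run: if the oracle answers \emph{inside}, the halfspace is inserted into $F$ and skipped thereafter, and if it answers \emph{not inside} with a solution $x^*$, then $(1+\varepsilon)f(x^*)$ violates it, so $f(x^*)$ violates it as well and it never reappears as a facet of the monotonically growing polyhedra. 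This bounds the oracle contribution by $\mathcal{O}(|R|^{\lfloor d/2\rfloor+1}\,T_{\hpO})$, matching the $T_{\hpO}$ term.

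For the remaining work I would charge, to each of the $\mathcal{O}(|R|^{\lfloor d/2\rfloor})$ facets of each of the $|R|$ instances, a cost of $\mathcal{O}(\lfloor d/2\rfloor\log|R|)$. This covers producing the facet in an (incremental) facet enumeration and testing, via a balanced search structure holding $F$ (whose size is $\mathcal{O}(|R|^{\lfloor d/2\rfloor+1})$), whether its halfspace already lies in $F$; the corresponding lookup costs $\mathcal{O}(\log|F|)=\mathcal{O}(\lfloor d/2\rfloor\log|R|)$ since $d$ is constant. Summed over all facets and instances this gives $\mathcal{O}(|R|^{\lfloor d/2\rfloor+1}\lfloor d/2\rfloor\log|R|)$, and adding the two contributions yields the claimed bound. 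The main obstacle is the clean justification of the facet count $\mathcal{O}(|R|^{\lfloor d/2\rfloor})$ for the \emph{unbounded} polyhedra $\apprii{k}$ --- the Upper Bound Theorem is usually stated for polytopes, so one must account for the fixed recession cone $\minCone$ --- together with pinning down the per-facet enumeration and bookkeeping cost precisely enough that exactly the factor $\lfloor d/2\rfloor\log|R|$ emerges.
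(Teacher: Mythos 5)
Your proposal follows essentially the same route as the paper's proof: count the $|R|$ polyhedra, bound each one's facets by $\mathcal{O}(|R|^{\lfloor d/2\rfloor})$ via the Upper Bound Theorem, charge $T_{\hpO}$ per facet for the oracle and $\mathcal{O}(\log|F|)=\mathcal{O}(\lfloor d/2\rfloor\log|R|)$ per facet for the lookup/insertion in $F$, and sum. The paper additionally cites a concrete facet-enumeration algorithm (\citet{chazelle1993OptimalConvexHull}) for the per-polyhedron enumeration cost and, like you, does not dwell on the unboundedness caveat for the Upper Bound Theorem, so your argument is correct and matches the paper's.
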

\begin{proof}{Proof}
    There are $\left|R\right|$ different inner approximation polyhedra throughout the execution of Algorithm~\ref{algo::innerapx}, each with at most $\left|R\right|$~vertices.

    For each polyhedron~$\apprii{k}$, Algorithm~\ref{algo::innerapx} enumerates the facets, filters out halfspaces that have been checked in previous iterations, and calls the plane separating oracle iteratively with the halfspaces induced by the remaining facets.
    Enumerating the facets can be done with the facet enumeration from \citet{chazelle1993OptimalConvexHull} in $\mathcal{O}\left(\left|R\right|\log\left|R\right|+\left|R\right|^{\left\lfloor\frac{d}{2}\right\rfloor}\right)$.
    The number of facets  can be bounded by the upper bound theorem \citep{seidel1995UpperBoundTheorem} and is in $\mathcal{O}\left({\left|R\right|}^{\left\lfloor\frac{d}{2}\right\rfloor}\right)$.
    By organizing the set~$F$ appropriately, looking up whether a halfspace has been checked in a previous iteration incurs an overhead of at most $\bigO{\log \left| F\right|}\in\bigO{\log\left(\left| R\right|^{2\cdot\left\lfloor\frac{d}{2}\right\rfloor}\right)}$.
    The same overhead occurs each time a new halfspace is added to $F$.

    \noindent
    Overall, this leads to a running time bound in
    \[
        \mathcal{O}\left(\left|R\right|\left(\left|R\right|\log\left|R\right|+\left|R\right|^{\left\lfloor\frac{d}{2}\right\rfloor}\cdot \left(1+ \left\lfloor\frac{d}{2}\right\rfloor\log\left|R\right|+T_{\hpO}\right)\right)\right).
    \]
    Simplified, this yields the desired term.
\end{proof}

Lemma~\ref{lemma::innerapx::outputRunningTime} implies that if~$\left|R\right|$ is in $\oStand$, the running time of Algorithm~\ref{algo::innerapx} is in $\oPoly{\langle I \rangle,\varepsilon^{-1},T_{\hpO}}$.
To show that this is the case, we first need the following result:

\begin{lemma}\label{lemma::innerapx::HrecessionCone}
    For each halfspace $H=\left\{z\in\mathbb{R}^{\dobs} : w^\T z\geq c\right\}$ that appears in Step~4 of Algorithm~\ref{algo::innerapx}, it holds that $w\in\minCone$.
\end{lemma}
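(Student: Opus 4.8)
The plan is to exploit that every halfspace appearing in Step~4 is facet-supporting, hence valid for $\apprii{k}$, together with the fact that $\apprii{k}$ has recession cone exactly $\minCone$. The normal vector of any valid halfspace must lie in the dual cone of the recession cone, and since the dual of $\minCone$ is $\minCone$ itself, this immediately gives $w\in\minCone$. So the whole argument reduces to identifying the recession cone of $\apprii{k}$ and then pushing a point of $\apprii{k}$ to infinity along a recession direction.

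Concretely, I would first observe that, by construction, each $\apprii{k}$ is of the form $\conv\left\{f(x) : x\in R\right\}+\minCone$, with $\apprii{1}=\{f(x^{\mathrm{init}})\}+\minCone$ as the special single-point case. Since the convex hull of finitely many points is bounded, the recession cone of $\apprii{k}$ equals $\minCone=\mathbb{R}_{\geq0}^d$ in every iteration. Next, because $H=\{z : w^\T z\geq c\}$ is facet-supporting it is in particular valid, so $\apprii{k}\subseteq H$. Now fix any $p\in\apprii{k}$ and any recession direction $r\in\minCone$. Then $p+a r\in\apprii{k}\subseteq H$ for every $a\geq0$, i.e.\ $w^\T p+a\,w^\T r\geq c$ for all $a\geq0$. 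Letting $a\to\infty$ forces $w^\T r\geq0$. Applying this with $r=e_i$, the $i$-th unit vector (which lies in $\minCone$), yields $w_i\geq0$ for every $i\in[d]$, and hence $w\in\minCone$.

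I do not expect a genuine obstacle here beyond bookkeeping: the only things to verify carefully are that $\apprii{k}$ really retains $\minCone$ as its recession cone throughout the run (which is immediate from the update rule, as it is always a polytope plus $\minCone$) and that the inequality direction is handled correctly under the convention $w^\T z\geq c$, so that the unbounded recession direction drives the left-hand side to $+\infty$ rather than $-\infty$. The base case $\apprii{1}=\{f(x^{\mathrm{init}})\}+\minCone$ already has recession cone $\minCone$, and since every later polyhedron is again of the same form, the argument applies uniformly to whichever $\apprii{k}$ is current in Step~4.
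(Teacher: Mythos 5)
Your proof is correct and follows essentially the same route as the paper: both arguments identify $\minCone$ as the recession cone of $\apprii{k}$ and use validity of the facet-supporting halfspace to force $w\geq 0$. The only cosmetic difference is in the last step, where the paper invokes LP duality (boundedness of $\min_{z\in\minCone}w^\T z$ implies feasibility of the dual) while you argue directly by letting $a\to\infty$ along the unit vectors; your version is, if anything, slightly more elementary.
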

\begin{proof}{Proof}
    Each halfspace~$H$ that appears is valid for the polyhedron $\apprii{k}$ of the current iteration~$k$.
    By construction, $\apprii{k}$ has the recession cone~$\minCone$.
    Due to this recession cone, the linear program $\min_{z\in \minCone} w^\T z$ is bounded, as otherwise the halfspace cannot be valid.
    Then, its dual linear program $\max0\ \mathrm{s.t.}\,0\leq w$ is feasible, which shows that $w\geq0$.
\end{proof}

We now bound the number of solutions that can be returned by the oracle.
Recall the construction of~$\subdi$ from Definition~\ref{def::prelims::grid}.
The values~$\LB$ and~$\UB$ are determined via the bound on the encoding length of points in a set.
By definition, a plane separating oracle returns only solutions whose encoding length is bounded in \oPoly{\langle I \rangle}.
Every image of such a solution also has an encoding length bounded in \oPoly{\langle I \rangle}.
Hence, the grid~$\subdi$ can be constructed with~$\LB$ and~$\UB$ determined by the bound on the encoding length of the images.
For the remainder of this section, $\subdi$ is assumed to be constructed in this way.

\begin{lemma}\label{lemma::innerapx::onceperS}
    Consider Algorithm~\ref{algo::innerapx} with a plane separating oracle~$\hpO$.
    Let~$S$ be any rectangle from~$\subdi$.
    Then there is at most one solution with an image in~$S$ returned by $\hpO$ throughout the entire algorithm.
\end{lemma}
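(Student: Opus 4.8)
The plan is to argue by contradiction. Suppose that two distinct solutions $x_1$ and $x_2$, both with images in the same rectangle $S$, are returned by $\hpO$ during the run of Algorithm~\ref{algo::innerapx}, and assume without loss of generality that $x_1$ is returned in an earlier iteration than $x_2$. I would derive a contradiction from the interplay between this ordering, the fact that every halfspace direction lies in $\minCone$, and the grid property of Lemma~\ref{lemma::grid::alphaprops}.

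First I would record the two inequalities produced by the algorithm. When $\hpO$ returns $x_2$, it is called on the modified halfspace $H_{1+\varepsilon}$ of some $H=\{z\in\mathbb{R}^\dobs : w_2^\T z\geq c_2\}\in\halfsp(\apprii{k})$, where $\apprii{k}$ is the inner approximation polyhedron at that iteration. By the definition of a plane separating oracle and of $H_{1+\varepsilon}$, the return of $x_2$ means $(1+\varepsilon)\,w_2^\T f(x_2) < c_2$. On the other hand, since $x_1$ was returned in an earlier iteration, it already belongs to $R$ when $H$ is checked, so $f(x_1)$ is contained in the current polyhedron $\apprii{k}=\conv\{f(x):x\in R\}+\minCone$; because $H$ is facet-supporting and hence valid for $\apprii{k}$, this yields $w_2^\T f(x_1)\geq c_2$.

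The key step is to connect these two bounds through the minimal vertex $\ideal{s}$ of $S$. By Lemma~\ref{lemma::innerapx::HrecessionCone}, the direction $w_2$ lies in $\minCone$, so Lemma~\ref{lemma::grid::alphaprops} applies with $w=w_2$ to both images $f(x_1),f(x_2)\in S$, giving $w_2^\T f(x_1)\leq (1+\varepsilon)\,w_2^\T\ideal{s}$ and $w_2^\T\ideal{s}\leq w_2^\T f(x_2)$. Chaining all four inequalities produces
\[
    c_2 \leq w_2^\T f(x_1) \leq (1+\varepsilon)\,w_2^\T\ideal{s} \leq (1+\varepsilon)\,w_2^\T f(x_2) < c_2,
\]
which is the desired contradiction.

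I expect the main subtlety to be the bookkeeping that guarantees $f(x_1)\in\apprii{k}$ at the moment $x_2$ is returned. This relies on the fact that every solution returned by the oracle is permanently added to $R$ and that each polyhedron is rebuilt as $\conv\{f(x):x\in R\}+\minCone$, so images of earlier returns never leave the polyhedron. Everything else is a direct combination of the oracle's defining property, the validity of facet-supporting halfspaces, and Lemma~\ref{lemma::grid::alphaprops}; in particular, no separate case analysis on whether components of $\ideal{s}$ vanish is required, since the grid lemma already handles that uniformly.
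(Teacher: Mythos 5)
Your proof is correct and follows essentially the same route as the paper: both arguments hinge on the facts that the first returned image stays in every subsequent polyhedron $\apprii{k}$ (hence satisfies $w^\T f(x_1)\geq c$ for every later facet-supporting halfspace), that $w\in\minCone$ lets Lemma~\ref{lemma::grid::alphaprops} route both images through the minimal vertex $\ideal{s}$, and that the oracle's return condition gives $(1+\varepsilon)w^\T f(x_2)<c$. The only cosmetic difference is that you phrase the resulting chain of inequalities as a contradiction, whereas the paper states it directly as ``any later returned image lies outside $S$.''
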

\begin{proof}{Proof}
    Let~$y^*$ be the image of the first solution returned by~$\hpO$ with an image in~$S$ (the case where there is no such~$y^*$ is trivial).
    Let~$\apprii{k'}$ be the inner approximation polyhedron that is constructed after this solution was returned.
    Because $\apprii{k'}\subseteq\apprii{k}$ for $k\geq k'$, it holds that $y^*\in\apprii{k}$ for each $k\geq k'$.
    Let $H=\left\{z\in\mathbb{R}^\dobs : w^\T z\geq c\right\}$ be a facet-supporting halfspace of $\apprii{k}$ with $k\geq k'$.

    Let~$\ideal{s}$ denote the minimal vertex of~$S$.
    From $w\in \minCone$ (Lemma~\ref{lemma::innerapx::HrecessionCone}) and $y^*\in S$, it follows that $w^\T y^* \leq (1+\varepsilon)w^\T \ideal{s}$ (Lemma~\ref{lemma::grid::alphaprops}).
    As~$y^*$ also is contained in~$H$, this leads to $c\leq w^\T y^* \leq (1+\varepsilon)w^\T \ideal{s}$.

    Now let~$y$ denote the image of a solution returned by~$\hpO$ if it was called for the halfspace $H_{1+\varepsilon}=\left\{z\in\mathbb{R}^\dobs : (1+\varepsilon)w^\T z\geq c\right\}$ and returned a solution.
    As it was returned by~$\hpO$, $y$ is not in $H_{1+\varepsilon}$ and, therefore, $(1+\varepsilon)w^\T y<c\leq(1+\varepsilon)w^\T \ideal{s}$.
    Furthermore, $(1+\varepsilon)w$ is in $\minCone$ and, thus, $(1+\varepsilon )w^\T y<(1+\varepsilon)w^\T s$ for all $s\in S$ (Lemma~\ref{lemma::grid::alphaprops}).
    Hence, $y\notin S$.
\end{proof}

\noindent
This leads to the central theorem on the running time:

\begin{theorem}\label{theorem::innerapx::runningTime}
    Algorithm~\ref{algo::innerapx} has a running time in $\oPoly{\langle I \rangle, \varepsilon^{-1}, T_{\hpO}}$, where~$T_{\hpO}$ is an upper bound on the running time of the plane separating oracle.
\end{theorem}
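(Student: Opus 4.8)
The plan is to reduce the claim to the cardinality bound on~$R$ observed after Lemma~\ref{lemma::innerapx::outputRunningTime}: it suffices to show that the size of~$R$ at termination lies in~$\oStand$, since that remark then immediately yields the running time $\oPoly{\langle I \rangle,\varepsilon^{-1},T_{\hpO}}$.

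To bound~$|R|$, I would count the solutions returned by~$\hpO$ by means of the grid~$\subdi$, constructed as described just before Lemma~\ref{lemma::innerapx::onceperS} with~$\LB$ and~$\UB$ determined by the uniform encoding-length bound~$p$ on the images of oracle outputs. Such a bound $p\in\oPoly{\langle I \rangle}$ exists because Definition~\ref{def::planeseporacle} requires every returned solution~$x$ to satisfy $\langle x\rangle\in\oPoly{\langle I \rangle}$, whence its image has encoding length in $\oPoly{\langle I \rangle}$ as well. The set~$R$ consists of the initial solution~$x^\mathrm{init}$ together with every solution returned by a call to~$\hpO$, and by the covering property of the grid the image of each returned solution lies in at least one hyperrectangle of~$\subdi$.

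The key step is then Lemma~\ref{lemma::innerapx::onceperS}: over the entire run, at most one solution with image in any fixed rectangle $S\in\subdi$ is returned by~$\hpO$. Since every returned image lies in some rectangle, the total number of returned solutions is at most~$|\subdi|$, so that $|R|\leq 1+|\subdi|$, where the additional solution accounts for~$x^\mathrm{init}$. It remains to note that~$|\subdi|$ is polynomially bounded: as recorded after Definition~\ref{def::prelims::grid}, $|\subdi|\in\bigO{(\varepsilon^{-1}p)^{\dobs}}$, and since $p\in\oPoly{\langle I \rangle}$ and~$d$ is a constant, this lies in~$\oStand$. Hence $|R|\in\oStand$, and the running-time claim follows from Lemma~\ref{lemma::innerapx::outputRunningTime}.

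The main obstacle is not a difficult estimate but rather ensuring that the two structural properties underlying the count actually hold, which is exactly why the preceding lemmas were proved. That every returned image is covered by~$\subdi$ hinges on the encoding-length requirement built into Definition~\ref{def::planeseporacle}; this is the reason we strengthened the oracle relative to \citet{csirmaz2021InnerApproximationAlgorithm}. That no rectangle is hit twice is the content of Lemma~\ref{lemma::innerapx::onceperS}, whose proof in turn relies on the nonnegativity of the weight vectors~$w$ (Lemma~\ref{lemma::innerapx::HrecessionCone}) and on the order-preserving approximation property of the minimal vertex of each rectangle (Lemma~\ref{lemma::grid::alphaprops}).
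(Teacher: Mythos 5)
Your proposal is correct and follows essentially the same route as the paper's proof: bound $|R|$ by $|\subdi|$ (plus the initial solution) via Lemma~\ref{lemma::innerapx::onceperS}, note that $|\subdi|\in\oStand$, and plug this into Lemma~\ref{lemma::innerapx::outputRunningTime}. The additional justification you give for why the grid covers all returned images is exactly the discussion the paper places just before Lemma~\ref{lemma::innerapx::onceperS}, so nothing is missing or different in substance.
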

\begin{proof}{Proof}
    By Lemma~\ref{lemma::innerapx::onceperS}, at most $\left|\subdi\right|$~solutions are added to $R$, and $\left|\subdi\right|$ is in \oStand.
    Plugging this into Lemma~\ref{lemma::innerapx::outputRunningTime} gives a running time bound in $\oPoly{\langle I \rangle, \varepsilon^{-1}, T_{\hpO}}$.
\end{proof}

Concerning the running time bound~$T_{\hpO}$ for the plane separating oracle, $T_{\hpO}$ is in $\oPoly{\langle I\rangle}$ if the plane separating oracle has a running time in $\oPoly{\langle H, I\rangle}$ for a halfspace $H$.
Due to Lemma~\ref{lemma::innerapx::polyencoding}, $\langle H \rangle\in\oPoly{\langle I\rangle}$ holds for every halfspace given to $\hpO$ in Algorithm~\ref{algo::innerapx}.

\smallskip

To use Algorithm~\ref{algo::innerapx} as a $(1+\varepsilon)$-convex approximation algorithm, an initial solution must first be obtained in polynomial time.
In theory, this can be achieved by using the plane separating oracle in a binary search scheme.
In practice, much faster heuristics are often at hand.

After termination, the set~$R$ is a $(1+\varepsilon)$-convex approximation set, but may still contain solutions that do not map to vertices of~$\appri$ (as seen in Figure~\ref{fig::innerapx::run}).
These solutions are redundant for a $(1+\varepsilon)$-convex approximation set and can be filtered by a post-processing step in polynomial time.

\smallskip

Although not needed to prove polynomial running time, the bound on the number of returned solutions can be tightened a bit if the stronger plane separating oracles from \citet{csirmaz2021InnerApproximationAlgorithm} are used.
There, the oracle must return solutions that are supported or even extreme supported.
Combining Lemma~\ref{lemma::innerapx::onceperS} with the bound on hyperrectangles of $\subdi$ containing supported solutions then shows that the total number of solutions returned by such a stronger oracle in Algorithm~\ref{algo::innerapx} is bounded in $\bigO{{(\varepsilon^{-1} p)}^{\dobs-1}}$,
where~$p$ is the bound on the encoding length of returned images.

Furthermore, a basic property of the exact inner approximation algorithm from \citet{csirmaz2021InnerApproximationAlgorithm} carries over when these stronger oracles are used.
If a plane separating oracle returns only supported solutions, each with an image maximizing the distance to the respective halfspace, at most $\left|\face(\edgi)\right|$ solutions can be returned in Algorithm~\ref{algo::innerapx}.
If it returns only extreme supported solutions, at most $\left|\vertx(\edgi)\right|$ solutions can be returned.
While the second result is self-explanatory, the proof for the first result given in \citet{csirmaz2021InnerApproximationAlgorithm} also applies when the halfspace~ $H_{1+\varepsilon}$ is considered instead of~$H$.
Hence, when a stronger plane separating oracles is used, the running time of Algorithm~\ref{algo::innerapx} is bounded by~$\left|\subdi\right|$ and the structural complexity of~$\edgi$.
This ensures that Algorithm~\ref{algo::innerapx} can adapt to structurally easy instances, even if~$\left|\subdi\right|$ is rather high.

\section{The Weighted Sum Scalarization as Oracle}\label{sec::innerapx::WS}

In this section, we construct practical plane separating oracles.
A well-known scalarization for MOMILPs is the \emph{weighted sum scalarization} $\WS(w)$ \citep{bazgan.ruzika.ea2022PowerWeightedSum}.
It is parametrized by a weight~$w\in \minCone$ and is the single-objective optimization problem
\[
\begin{array}{l l}
    \min & w^\T f(x) \\
    \mathrm{s.t.} & x\in\solfeas\text{.}
\end{array}
\]

It is well-known that every optimal solution to a weighted sum scalarization is supported \citep{ehrgott2005MulticriteriaOptimization}.
Moreover, for every weighted sum scalarization, at least one optimal solution has an image that is also a vertex of~$\edgi$.
Thus, it can be assumed that every solution returned by a weighted sum oracle has encoding length in \oPoly{\langle I \rangle}.

In \citet{csirmaz2021InnerApproximationAlgorithm}, the weighted sum scalarization is used to construct a plane separating oracle.
This oracle satisfies the stronger conditions outlined at the end of Section~\ref{sec::innerapx} --- it returns only supported solutions with maximal distance to the halfspace.
Algorithm~\ref{algo::innerapx::wsexact} shows the construction of the oracle.
With some abuse of notation, we denote by $\WS(w)$ an algorithm that solves the weighted sum scalarization.

\begin{algorithm}
    \caption{A plane separating oracle for MOMILP using the weighted sum scalarization.
    }\label{algo::innerapx::wsexact}
    \begin{algorithmic}[1]
        \Function{$\hpOWS$}{halfspace $H=\left\{z\in\mathbb{R}^\dobs : w^\T z\geq c\right\}$}
        \State{} $x^*\gets \WS(w)$\label{algo::innerapx::wsexact::wscall}
        \If{$w^\T f(x^*)\geq c$}
            \State{} \Return $\left(\textit{inside}, \emptyset\right)$
        \Else{}
            \State{} \Return $\left(\textit{not inside}, x^*\right)$
        \EndIf{}
        \EndFunction{}
    \end{algorithmic}
\end{algorithm}

\begin{corollary}
 Let $\WS(w)$ be solvable in time polynomial in $\oPoly{\langle I,w\rangle}$ for each $w\in\minCone$.
 Then, Algorithm~\ref{algo::innerapx} can be used to construct a $\cfptas$ by using $\hpOWS$ as the plane separating oracle.
\end{corollary}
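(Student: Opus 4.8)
The plan is to verify that $\hpOWS$ meets the requirements of a plane separating oracle in the sense of Definition~\ref{def::planeseporacle}, and then to read off correctness from Theorem~\ref{theorem::innerapx::Risconvexapx} and the running time from Theorem~\ref{theorem::innerapx::runningTime}. Since a $\cfptas$ receives only~$I$ and~$\varepsilon$, I would additionally produce the initial solution that Algorithm~\ref{algo::innerapx} needs by a single call $\WS(w)$ for a fixed strictly positive weight $w\in\minCone$ (e.g.\ the all-ones weight): its optimum is supported and, as recalled just before the corollary, can be chosen as the image of a solution of encoding length in $\oPoly{\codI}$, which serves as $x^\mathrm{init}$.

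For oracle validity I would first check the inside/not-inside dichotomy. Given a halfspace $H=\{z:w^\T z\geq c\}$ with $w\in\minCone$, the solution $x^*=\WS(w)$ minimizes $w^\T f(x)$ over~$\solfeas$. Hence $w^\T f(x^*)\geq c$ forces $w^\T f(x)\geq c$ for every feasible~$x$, so all images lie in~$H$ and returning $(\textit{inside},\emptyset)$ is correct; otherwise $f(x^*)\notin H$, so returning $(\textit{not inside},x^*)$ is correct. The encoding-length condition on the returned solution is handled by the standing fact that every weighted sum scalarization attains an optimum at a vertex of~$\edgi$, and that such a vertex is the image of a solution of encoding length in $\oPoly{\codI}$; assuming $\WS$ returns such a solution gives $\langle x^*\rangle\in\oPoly{\codI}$ independently of~$w$ and~$\varepsilon$. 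With this, $\hpOWS$ is a valid plane separating oracle, and Theorem~\ref{theorem::innerapx::Risconvexapx} immediately yields that the output~$R$ is a $(1+\varepsilon)$-convex approximation set.

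For the running time I would bound $T_{\hpO}$ and substitute into Theorem~\ref{theorem::innerapx::runningTime}. Each oracle call inside Algorithm~\ref{algo::innerapx} receives the modified halfspace $H_{1+\varepsilon}$, whose weight is $(1+\varepsilon)w$ with~$w$ a facet normal of some~$\apprii{k}$. By Lemma~\ref{lemma::innerapx::polyencoding} we have $\langle w\rangle\in\oPoly{\codI}$, and $(1+\varepsilon)w\in\minCone$ by Lemma~\ref{lemma::innerapx::HrecessionCone}, so $\langle(1+\varepsilon)w\rangle\in\oPoly{\codI,\langle\varepsilon\rangle}$. The hypothesis then bounds one $\WS$ call by $\oPoly{\codI,\langle\varepsilon\rangle}$, and the single comparison $w^\T f(x^*)\geq c$ adds only a polynomial-length arithmetic operation, giving $T_{\hpO}\in\oPoly{\codI,\langle\varepsilon\rangle}$. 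Since Theorem~\ref{theorem::innerapx::runningTime} already absorbs $|R|\leq|\subdi|\in\oStand$, this yields a total running time in $\oPoly{\codI,\varepsilon^{-1},\langle\varepsilon\rangle}$.

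The only genuine obstacle is turning this into the required $\oStand$ bound, i.e.\ removing the spurious dependence on the raw encoding length~$\langle\varepsilon\rangle$ that the factor $(1+\varepsilon)$ injects into the oracle's weight. This is not automatic: for a rational such as $\varepsilon=(b-1)/b$ one has $\varepsilon^{-1}\in\mathcal{O}(1)$ while $\langle\varepsilon\rangle\in\Theta(\log b)$ is unbounded, so $\langle\varepsilon\rangle$ is in general \emph{not} polynomial in $\varepsilon^{-1}$. I would remove it by first replacing~$\varepsilon$ with $\varepsilon'\coloneqq 1/\lceil\varepsilon^{-1}\rceil\leq\varepsilon$: this preserves the guarantee, since a $(1+\varepsilon')$-convex approximation set is a $(1+\varepsilon)$-convex approximation set whenever $\varepsilon'\leq\varepsilon$; it keeps the grid size comparable, since $(\varepsilon')^{-1}\leq\varepsilon^{-1}+1$; and it ensures $\langle\varepsilon'\rangle\in\mathcal{O}(\log\varepsilon^{-1})\subseteq\oPoly{\varepsilon^{-1}}$, because $\varepsilon'=1/m$ for an integer $m\leq\varepsilon^{-1}+1$. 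With this normalization the running time collapses to $\oStand$, and together with the correctness established above this shows that the construction is a $\cfptas$.
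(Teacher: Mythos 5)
Your proposal is correct and follows essentially the route the paper intends: the corollary is stated there without an explicit proof, relying on the surrounding discussion that $\hpOWS$ is a valid plane separating oracle (returning supported solutions of encoding length in $\oPoly{\langle I\rangle}$) whose running time is polynomial in the encoding of the halfspace, so that Theorems~\ref{theorem::innerapx::Risconvexapx} and~\ref{theorem::innerapx::runningTime} apply directly. Your two added refinements --- obtaining $x^\mathrm{init}$ from a single call to $\WS$ with the all-ones weight, and normalizing $\varepsilon$ to $1/\lceil\varepsilon^{-1}\rceil$ to eliminate the spurious dependence on $\langle\varepsilon\rangle$ that the factor $(1+\varepsilon)$ injects into the oracle's weight --- are both sound and tighten a point the paper glosses over when it asserts $\langle H\rangle\in\oPoly{\langle I\rangle}$ for the halfspaces passed to the oracle.
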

Some selected applications where $\hpOWS$ can be solved in polynomial time are listed (among others) in Table~\ref{table::innerapx::problems}. However, there are many problems for which the existence of a polynomial-time weighted sum algorithm (or any other polynomial-time plane separating oracle) would imply $\P=\NP$, for example the multi-objective variants of the travelling salesman problem or the knapsack problem.
Yet, for some of these problems, single-objective approximation algorithms can be used to approximate the weighted sum scalarization.
\begin{definition}\label{def::innerapx::apxws}
    For a minimization MOMILP and some $\alpha\in\mathbb{R}_{>1}$, let $\WS_\alpha$ be an $\alpha$-approximation algorithm for the weighted sum scalarization.
    In Step~2 of Algorithm~\ref{algo::innerapx::wsexact}, use $\WS_\alpha(w)$ instead of $\WS(w)$.
    The resulting oracle is denoted by $\hpOWS_\alpha$.
\end{definition}
Again, we assume that each solution returned by $\hpOWS_\alpha$ has an encoding length in \oPoly{\langle I \rangle}.

If Algorithm~\ref{algo::innerapx} is executed with $\hpOWS_\alpha$ as the plane separating oracle, Lemmas~\ref{lemma::innerapx::polyencoding},~\ref{lemma::innerapx::outputRunningTime} and~\ref{lemma::innerapx::onceperS} directly apply as they only rely on the polynomial encoding length of the returned solutions.
Hence, the running time result from Theorem~\ref{theorem::innerapx::runningTime} still holds.
However, as the returned solutions are not necessarily supported, the tighter bounds from the end of Section~\ref{sec::innerapx} do not apply anymore.

\noindent
With the alternative oracle, the approximation factor of Algorithm~\ref{algo::innerapx} changes.
\begin{theorem}\label{theorem::innerapx::apxRisconvexapx}
    The set~$R$ computed by Algorithm~\ref{algo::innerapx} when using $\hpOWS_\alpha$ as plane separating oracle is an $\alpha\cdot(1+\varepsilon)$-convex approximation set.
\end{theorem}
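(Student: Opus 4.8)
The plan is to follow the structure of the proof of Theorem~\ref{theorem::innerapx::Risconvexapx}, but to track carefully the extra factor~$\alpha$ that the approximate oracle introduces. Concretely, I would aim to establish the containment $\edgi_{\alpha(1+\varepsilon)}\subseteq\appri$, where $\edgi_{\alpha(1+\varepsilon)}\coloneqq\{\alpha(1+\varepsilon)v : v\in\edgi\}$, and then invoke Proposition~\ref{proposition::prelims::edgiInAppri} with the factor $1+\varepsilon$ replaced by $\alpha(1+\varepsilon)$. The first thing to note is that the proof of Proposition~\ref{proposition::prelims::edgiInAppri} never uses any property of $1+\varepsilon$ beyond its being a positive scaling factor, so it holds verbatim for any factor greater than one; in particular, $\edgi_{\alpha(1+\varepsilon)}\subseteq\appri$ is equivalent to $R$ being an $\alpha(1+\varepsilon)$-convex approximation set.

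The key technical step is to reinterpret the \emph{inside} verdict of $\hpOWS_\alpha$. Fix a facet-supporting halfspace $H=\{z\in\mathbb{R}^\dobs : w^\T z\geq c\}$ that appears in Step~4. By Lemma~\ref{lemma::innerapx::HrecessionCone} we have $w\in\minCone$, hence $(1+\varepsilon)w\in\minCone$, so $\WS_\alpha((1+\varepsilon)w)$ is well-defined and its $\alpha$-approximation guarantee applies. When $\hpOWS_\alpha(H_{1+\varepsilon})$ returns \emph{inside} for the solution $x^*=\WS_\alpha((1+\varepsilon)w)$, two facts hold simultaneously: the inside test yields $(1+\varepsilon)w^\T f(x^*)\geq c$, while the approximation guarantee yields $(1+\varepsilon)w^\T f(x^*)\leq\alpha(1+\varepsilon)\min_{x\in\solfeas}w^\T f(x)$. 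Chaining these gives $\min_{x\in\solfeas}w^\T f(x)\geq \frac{c}{\alpha(1+\varepsilon)}$, so that $\alpha(1+\varepsilon)w^\T f(x)\geq c$ for every $x\in\solfeas$; equivalently, $\alpha(1+\varepsilon)f(x)\in H$ for every feasible solution.

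Finally, I would aggregate this over all facets of the terminal polyhedron. At termination, the while-loop condition guarantees that every facet-supporting halfspace of~$\appri$ lies in~$F$, i.e.\ was tested by $\hpOWS_\alpha$ and returned \emph{inside}. Hence, for every $x\in\solfeas$, the point $\alpha(1+\varepsilon)f(x)$ satisfies every inequality of the $\mathcal{H}$-representation of~$\appri$, so $\alpha(1+\varepsilon)f(x)\in\appri$. Using that $\solob\subseteq\edgi$, that~$\appri$ is convex, and that~$\appri$ has recession cone~$\minCone$, this membership of all scaled images propagates to the entire hull (the same propagation argument as in the proof of Proposition~\ref{proposition::prelims::edgiInAppri}), giving $\edgi_{\alpha(1+\varepsilon)}\subseteq\appri$. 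The generalized Proposition~\ref{proposition::prelims::edgiInAppri} then concludes that~$R$ is an $\alpha(1+\varepsilon)$-convex approximation set.

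The main obstacle is the second step. With the exact oracle of Theorem~\ref{theorem::innerapx::Risconvexapx}, an \emph{inside} verdict certifies exact membership $(1+\varepsilon)f(x)\in H$, whereas now the oracle only compares an \emph{approximate} weighted-sum value against~$c$. The care required is to ensure that the $(1+\varepsilon)$-scaling of the weight and the $\alpha$-approximation of the scalarization compose cleanly into the single factor $\alpha(1+\varepsilon)$ without interfering; this is precisely what the chaining of the two inequalities in the second paragraph accomplishes. Everything else is a direct transcription of the argument for the exact oracle.
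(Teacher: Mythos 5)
Your proposal is correct and follows essentially the same route as the paper's own proof: chaining the \emph{inside} verdict $(1+\varepsilon)w^\T f(x^*)\geq c$ with the $\alpha$-approximation guarantee of $\WS_\alpha((1+\varepsilon)w)$ to conclude $w^\T(\alpha(1+\varepsilon)f(x))\geq c$ for every facet-supporting halfspace, and then invoking Proposition~\ref{proposition::prelims::edgiInAppri} with the factor $\alpha(1+\varepsilon)$. Your explicit remarks that the proposition generalizes to arbitrary scaling factors and that $w\in\minCone$ via Lemma~\ref{lemma::innerapx::HrecessionCone} are welcome elaborations of steps the paper leaves implicit, but they do not change the argument.
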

\begin{proof}{Proof}
    Let $H=\left\{z\in\mathbb{R}^\dobs : w^\T z\geq c\right\}$ be any facet-supporting halfspace of $\appri$.
    After termination of Algorithm~\ref{algo::innerapx}, $\hpOWS_\alpha$ must have been called for $H_{1+\varepsilon}=\left\{z\in\mathbb{R}^\dobs : (1+\varepsilon)w^\T z\geq c\right\}$ without returning a solution.
    In this call, the oracle invokes $\WS_\alpha((1+\varepsilon)w)$, which must have returned a solution~$x^*$.
    Since $\WS_\alpha$ is an $\alpha$-approximation algorithm, $(1+\varepsilon)w^\T f(x^*)\leq \alpha(1+\varepsilon)w^\T f(x)$ holds for all~$x\in X$.
    Furthermore, since the oracle~$\hpOWS_\alpha$ did not return~$x^*$, $(1+\varepsilon)w^\T f(x^*)\geq c$ must also hold.
    Hence, $ w^\T(\alpha(1+\varepsilon)f(x))\geq c$ holds for all $x\in \solfeas$.
    Because this applies to all facet-supporting halfspaces of~$\appri$, it follows that $\alpha(1+\varepsilon)f(x)\in\appri$.
    Then, Proposition~\ref{proposition::prelims::edgiInAppri} shows that~$R$ is an $\alpha(1+\varepsilon)$-convex approximation set.
\end{proof}

\noindent
If $\WS_\alpha$ is a PTAS or an FPTAS, some stronger results are achievable.
\begin{theorem}\label{theorem::innerapx::cfptas}
    If $\WS_\alpha$ is an (F)PTAS, Algorithm~\ref{algo::innerapx} can be used to construct a \cbothptas.
\end{theorem}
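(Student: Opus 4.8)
The plan is to run Algorithm~\ref{algo::innerapx} with a suitably chosen internal parameter together with a suitably chosen member of the family $\{\WS_{1+\delta}\}_{\delta>0}$ guaranteed by the (F)PTAS, and to split the desired overall factor \emph{multiplicatively} between the oracle and the algorithm. Suppose the scheme receives an instance $I$ and a target parameter $\varepsilon\in\mathbb{Q}_{>0}$ and is required to output a $(1+\varepsilon)$-convex approximation set. Since $\WS_\alpha$ is an (F)PTAS, for every $\delta>0$ we have a $(1+\delta)$-approximation algorithm for the weighted sum scalarization, and I would use it to instantiate the oracle $\hpOWS_{1+\delta}$ from Definition~\ref{def::innerapx::apxws}.

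First I would fix the internal parameters by an equal split. Set $\gamma\coloneqq\sqrt{1+\varepsilon}-1>0$ and run Algorithm~\ref{algo::innerapx} with internal parameter $\gamma$ and oracle $\hpOWS_{1+\gamma}$ (that is, both the algorithm's parameter and the approximation factor $\alpha$ of the oracle are set to $1+\gamma$, respectively $\gamma$). By Theorem~\ref{theorem::innerapx::apxRisconvexapx}, the returned set $R$ is an $(1+\gamma)(1+\gamma)$-convex approximation set; since $(1+\gamma)^2=1+\varepsilon$, this is exactly a $(1+\varepsilon)$-convex approximation set, which settles correctness.

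Next I would verify the running time in the two cases. By Theorem~\ref{theorem::innerapx::runningTime}, Algorithm~\ref{algo::innerapx} runs in $\oPoly{\codI,\gamma^{-1},T_{\hpO}}$, where $T_{\hpO}$ is an upper bound on the running time of $\hpOWS_{1+\gamma}$, which up to the polynomially bounded overhead of the comparison in Algorithm~\ref{algo::innerapx::wsexact} equals the running time of $\WS_{1+\gamma}$. In the PTAS case, for every fixed $\varepsilon$ the value $\gamma$ is a fixed constant, so $\gamma^{-1}\in\bigO{1}$, and since $\WS$ is a PTAS we have $T_{\hpO}\in\oPoly{\codI}$; hence the total running time lies in $\oPoly{\codI}$ and we obtain a \cptas. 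In the FPTAS case, $T_{\hpO}\in\oPoly{\codI,\gamma^{-1}}$, so it remains to show that $\gamma^{-1}$ is polynomially bounded in $\varepsilon^{-1}$.

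The only point that is not entirely routine, and which I expect to be the crux, is precisely this last check: that taking a square root when splitting the factor does not blow up the parameter. I would establish it by rationalizing, $\gamma=\sqrt{1+\varepsilon}-1=\frac{\varepsilon}{\sqrt{1+\varepsilon}+1}$, so that $\gamma^{-1}=\frac{\sqrt{1+\varepsilon}+1}{\varepsilon}$, and then using $\sqrt{1+\varepsilon}\le 1+\tfrac{\varepsilon}{2}$ to get $\gamma^{-1}\le \tfrac{2}{\varepsilon}+\tfrac12\in\oPoly{\varepsilon^{-1}}$. Consequently $T_{\hpO}\in\oPoly{\codI,\varepsilon^{-1}}$ and $\gamma^{-1}\in\oPoly{\varepsilon^{-1}}$, so Theorem~\ref{theorem::innerapx::runningTime} yields a total running time in $\oPoly{\codI,\varepsilon^{-1}}$, i.e., a \cfptas. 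If one insists on rational parameters throughout (since $\gamma$ as defined may be irrational), one may instead pick any rational $\gamma'$ with $(1+\gamma')^2\le 1+\varepsilon$ and $\gamma'^{-1}\in\oPoly{\varepsilon^{-1}}$, which preserves both correctness and the running-time bound without any further change to the argument.
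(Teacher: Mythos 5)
Your proposal is correct and follows essentially the same route as the paper: split the target factor multiplicatively between the oracle $\hpOWS_{1+\beta}$ and the algorithm's internal parameter, then invoke Theorem~\ref{theorem::innerapx::apxRisconvexapx} for correctness and Theorem~\ref{theorem::innerapx::runningTime} for the running time, checking that the inverse parameters stay in $\oPoly{\varepsilon^{-1}}$. The only cosmetic difference is the choice of split --- the paper takes $\beta=\nicefrac{\varepsilon}{2}$ and $\gamma=\nicefrac{1+\varepsilon}{1+\beta}-1$, which is automatically rational with polynomially bounded encoding length, whereas your symmetric square-root split requires the rationality patch you add at the end.
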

\begin{proof}{Proof}
    Let a MOMILP instance~$I$, $\varepsilon\in\mathbb{Q}_{>0}$, and an (F)PTAS $\WS_{\alpha}$ for~$I$ be given.
    A $(1+\varepsilon)$-convex approximation algorithm can be constructed in two steps:
    First, $\beta\in\mathbb{Q}_{>0}$ and~$\gamma\in\mathbb{Q}_{>0}$ are chosen such that $(1+\beta)(1+\gamma)\leq1+\varepsilon$.
    Furthermore, the values $\beta^{-1}$ and $\gamma^{-1}$ must be bounded in $\oPoly{\varepsilon^{-1}}$ and the encoding lengths of~$\beta$ and~$\gamma$ must be bounded in $\oPoly{\langle\varepsilon\rangle}$.
    Algorithm~\ref{algo::innerapx} is then invoked with $\hpOWS_{1+\beta}$ as the oracle and~$(1+\gamma)$ instead of $(1+\varepsilon)$ as approximation factor.

    By Theorem~\ref{theorem::innerapx::apxRisconvexapx}, this procedure returns a $(1+\beta)(1+\gamma)$-convex approximation set, which then also is a $(1+\varepsilon)$-convex approximation set.
    By Theorem~\ref{theorem::innerapx::runningTime}, the procedure has the running time properties of a \cbothptas.
\end{proof}

An example for choosing $\beta$ and $\gamma$ so that they satisfy the conditions from the proof of Theorem~\ref{theorem::innerapx::cfptas} is to set $\beta\coloneq\nicefrac{\varepsilon}{2}$ and $\gamma\coloneq\nicefrac{1+\varepsilon}{1+\beta}-1$.
In Table~\ref{table::innerapx::problems}, we list some selected applications where an~$\alpha$ exists so that $\hpOWS_\alpha$ can be solved in polynomial time.

\begin{table}
    \centering
    \caption{Selected applications and the best achievable approximation factor for convex approximation\label{table::innerapx::problems}}
    {\begin{tabular}{l l l l}
        \toprule
        problem & $\alpha$ & $\mathcal{O}$  & convex-apx \\
        \midrule
        bounded MOLP & $1.0$ & $\mathsf{poly}\langle I \rangle$ & \cfptas \\
        MO single-pair shortest path & $1.0$ & $m + n \log n $ & \cfptas \\
        MO assignment & $1.0$ & $n^3$ & \cfptas \\
        MO knapsack & FPTAS & $n \log n + \frac{\ln(\epsilon^{-1})}{\epsilon^{4}} $ & \cfptas \\
        MO travelling salesman & $\log n$ & $n^3$ &  $\log n \cdot (1+\varepsilon)$ \\
        MO metric travelling salesman & $1.5$ & $n^3$ & $1.5\cdot(1+\varepsilon)$  \\
        \bottomrule
    \end{tabular}}

    \smallskip
    
    \begin{minipage}{0.84\textwidth}
        \scriptsize
        The column~$\alpha$ denotes the best possible approximation factor for the weighted sum scalarization, the column~$\mathcal{O}$ denotes the asymptotic running time of a corresponding approximation algorithm, and the column convex-apx denotes the resulting convex approximation factor for Algorithm~\ref{algo::innerapx}.
        The running times and approximation factors are either well-known or described in \citet{vazirani2001ApproximationAlgorithms}.
        The abbreviation \textit{MO} stands for \textit{multi-objective}.
    \end{minipage}
\end{table}

\section{Computational Results}

To evaluate the performance of Algorithm~\ref{algo::innerapx} with $\hpOWS$ as the oracle, we conduct a numerical study.
Our study consists of two parts.
In the first part, we follow the study conducted in~\citet{helfrich.ruzika.ea2024EfficientlyConstructing} by using the same metrics and instances, supplemented by an additional instance set.
In the second part, we evaluate the performance of our algorithm for metrics that measure the quality of solution sets as so-called \emph{representations} by following the setup developed in~\citet{sayin2024SupportedNondominatedPoints}. %

For the remainder of this section, \iaa{} denotes our algorithm.
A key result from \citet{helfrich.ruzika.ea2024EfficientlyConstructing} is that the algorithm therein runs much faster than other existing convex approximation algorithms.
Therefore, we use it as the benchmark algorithm for our comparisons.
The algorithm from \citet{helfrich.ruzika.ea2024EfficientlyConstructing} is denoted by \oaa{}.
It should be noted that, in theory, the worst case running time of \oaa{} is better than that of \iaa{}.
We omit a comparison to exact algorithms that compute the entire non-dominated set.
Computational studies (e.g., in~\citet{daechert.fleuren2024SimpleEfficientVersatile}) show that such algorithms already take hours on instances for which \iaa{} and \oaa{} terminate in seconds.%

\smallskip

Both \iaa{} and \oaa{} are implemented in C++17 and compiled with gcc 12.2.0.
We provide a code and a cmake build script in our git repository.\footnote{\href{https://github.com/INFORMSJoC/2025.1308}{\label{footnote1}\texttt{https://github.com/INFORMSJoC/2025.1308}}}
The facet enumeration needed for \iaa{} is implemented as a vertex enumeration for a dual polyhedron of~$\appri$ \citep{heyde.lohne2008GeometricDualityMultiple}.
The cdd library \citep{fukuda1997cdd} is used for the vertex enumerations of both \iaa{} and \oaa{}.
The computations were run on an Intel Xeon Gold 6226R CPU with 2.90GHz, 450GB of RAM, and Debian 6.1.\ as the operating system.
Each instance was run with a running time limit of one hour.

\smallskip

We start with the first part of our study.
For the problem instances, we combine three well-established sets of instances.
The first two are the sets that are also used in \citet{helfrich.ruzika.ea2024EfficientlyConstructing}.
They consist of instances of the $3$-objective knapsack problem (KP) and instances of the $3$-objective symmetric metric travelling salesman problem (TSP).
Weighted sum scalarizations for KP are approximated by a factor of~$0.5$ with the extended greedy algorithm \citep{kellerer.pferschy.ea2004KnapsackProblems} and for TSP by a factor of~$1.5$ with the algorithm from \citet{christofides2022WorstCaseAnalysisNew}.
To implement the latter,  we use the ogdf library \citep{chimani2013open}.
The third set of instances is a sample from the set of multi-objective assignment problem (AP) instances generated for the computational study in \citet{bokler.mutzel2015OutputSensitiveAlgorithmsEnumerating}.
This set contains problems with three to six objectives.
Weighted sum scalarizations for AP can be solved exactly in polynomial time, we use Gurobi~12 for this.

Both the AP and KP instances were generated by the same scheme as the often-used benchmark instances from~\citet{kirlik.sayin2014NewAlgorithmGenerating}.
However, convex approximation algorithms terminate in seconds on these benchmark instances, which is why we use the larger instances from~\citet{bokler.mutzel2015OutputSensitiveAlgorithmsEnumerating} and~\citet{helfrich.ruzika.ea2024EfficientlyConstructing}.
A summary of the instance set for the first part is given in Table~\ref{table::study::instances}.
For the complete set, see our git repository.\textsuperscript{\ref{footnote1}}

\begin{table}
\centering
    \caption{The instances used in the first part of our computational study.\label{table::study::instances}}
    {\begin{tabular}{l r r r r r l}
        \toprule
        type & $\dobs$ & sizes & objectives & $\#$ & $\alpha$ & from \\
        \midrule
        KP & $3$ & $10,20,\ldots,250$ & uniform& $5$ & $0.5$ & \citet{helfrich.ruzika.ea2024EfficientlyConstructing} \\
        & $3$ & $10,20,\ldots,250$ & conflicting& $5$ & $0.5$ & \citet{helfrich.ruzika.ea2024EfficientlyConstructing} \\
        TSP & $3$ & $10,20,\ldots,250$ & uniform & $5$ & $1.5$ & \citet{helfrich.ruzika.ea2024EfficientlyConstructing} \\
        AP & $3$ & $40,60,\ldots,200$ & uniform & $10$ & $1.0$ & \citet{bokler.mutzel2015OutputSensitiveAlgorithmsEnumerating} \\
         & $4$ & $10,20,\ldots,100$ & uniform & $10$ & $1.0$ & \citet{bokler.mutzel2015OutputSensitiveAlgorithmsEnumerating} \\
         & $5$ & $8,10,\ldots,22$ & uniform & $10$ & $1.0$ & \citet{bokler.mutzel2015OutputSensitiveAlgorithmsEnumerating} \\
         & $6$ & $4,6,\ldots,22$ & uniform & $10$ & $1.0$ & \citet{bokler.mutzel2015OutputSensitiveAlgorithmsEnumerating} \\
        \bottomrule
    \end{tabular}}

    \smallskip
    
    \begin{minipage}{0.85\textwidth}
    \scriptsize
        The column $\#$ denotes the number of instances of each size and the column $\alpha$ states the approximation factor for the weighted sum scalarization.
    \end{minipage}
    {}
\end{table}

To evaluate the performance of the algorithms in the first part of our study, we use the same three metrics as \citet{helfrich.ruzika.ea2024EfficientlyConstructing}.
The first metric is the total running time of each algorithm in seconds.
The second metric is the $\varepsilon$-convex indicator.
It measures the minimum value of $1+\varepsilon$ for which a solution set $R$ is still a $(1+\varepsilon)$-convex approximation set.
The third metric measures the ratio $\left|R\right|/\left|R^*\right|$, where~$R$ denotes the output set of a convex approximation algorithm and~$R^*$ the output set of the dual Benson algorithm.
Note that its output set might contain some non-extreme supported solutions.
In practice, the number of such is often negligible \citep{bokler.mutzel2015OutputSensitiveAlgorithmsEnumerating} and can be ignored here.
The dual Benson algorithm is also subject to the time limit of one hour.
Since we need its outcome set to compute both the $\varepsilon$-convex indicator and $\left|R\right|/\left|R^*\right|$, we can provide these metrics only for the smaller instances where it terminates in time.%

\begin{figure}[ht!]
    \caption{Results for the TSP instances\label{fig::study::tsp}}

    \centering
    \includegraphics[width=1\textwidth]{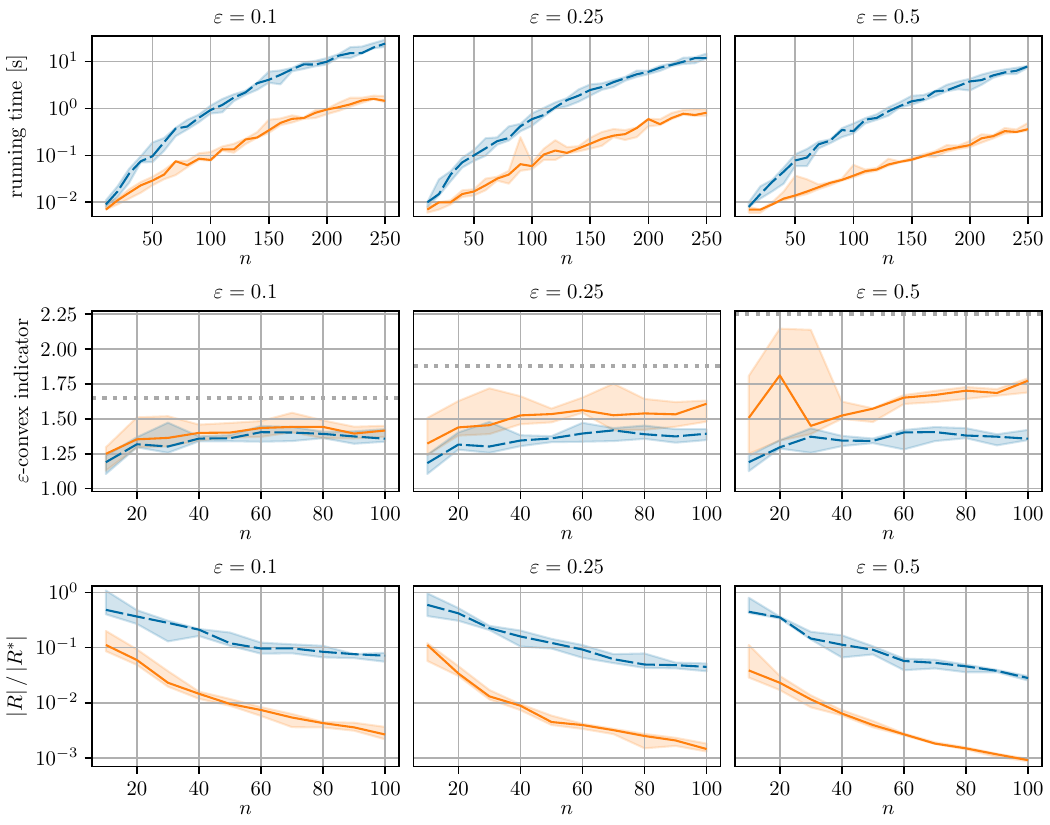}

    \begin{minipage}{0.95\textwidth}
        \footnotesize{%
            The value of~$n$ denotes the number of cities.
            Solid lines are for \iaa{}, dashed lines for \oaa{}.
            The lines always show the median value, shaded areas show the complete range of values.
            Dotted lines in the second row mark the maximum possible $\varepsilon$-convex indicator~$\alpha\cdot(1+\varepsilon)$.}
    \end{minipage}
\end{figure}

\begin{figure}[ht!]
    \caption{Results for the $3$-objective AP instances\label{fig::study::ap}}

    \centering
    \includegraphics[width=1\textwidth]{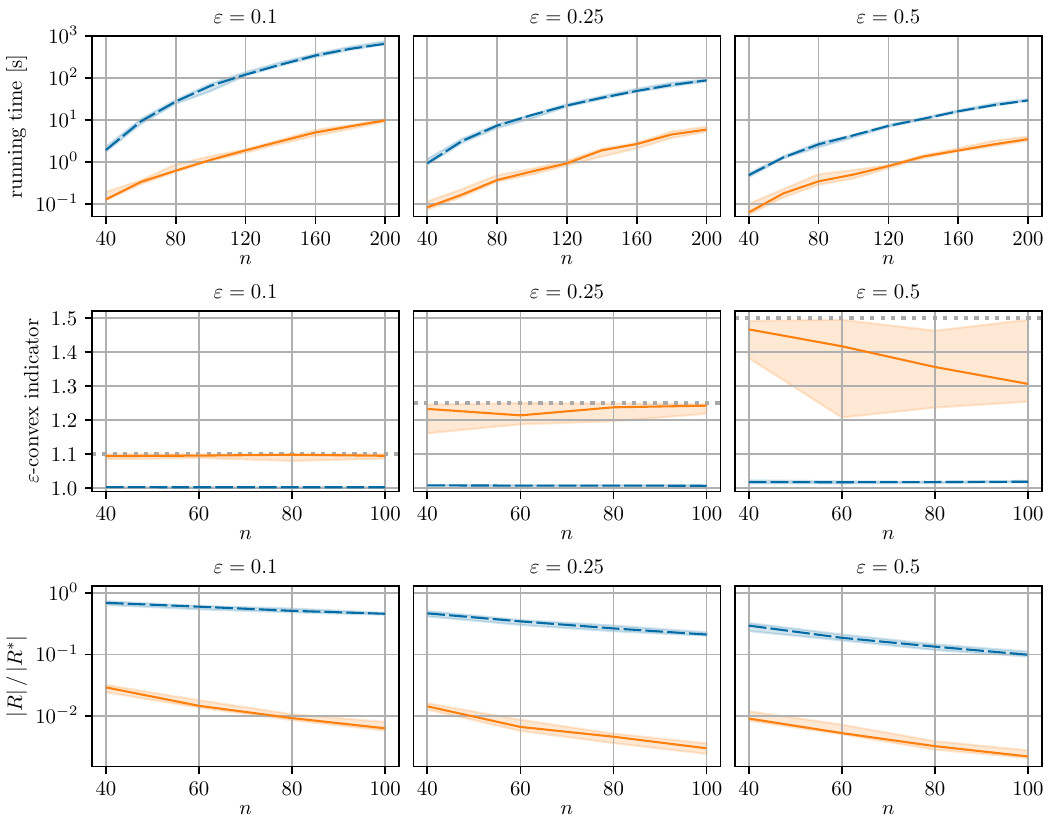}

    \begin{minipage}{0.95\textwidth}
        \footnotesize{%
            The value of~$n$ denotes the number of pairs in an assignment.
            Solid lines are for \iaa{}, dashed lines for \oaa{}.
            The lines always show the median value, shaded areas show the complete range of values.
            Dotted lines in the second row mark the maximum possible $\varepsilon$-convex indicator~$\alpha\cdot(1+\varepsilon)$.}
    \end{minipage}
\end{figure}

\begin{figure}[ht!]
    \caption{Running time results for the AP instances with more than three objectives\label{fig::study::apHigher}}

    \centering
    \includegraphics[width=1\textwidth]{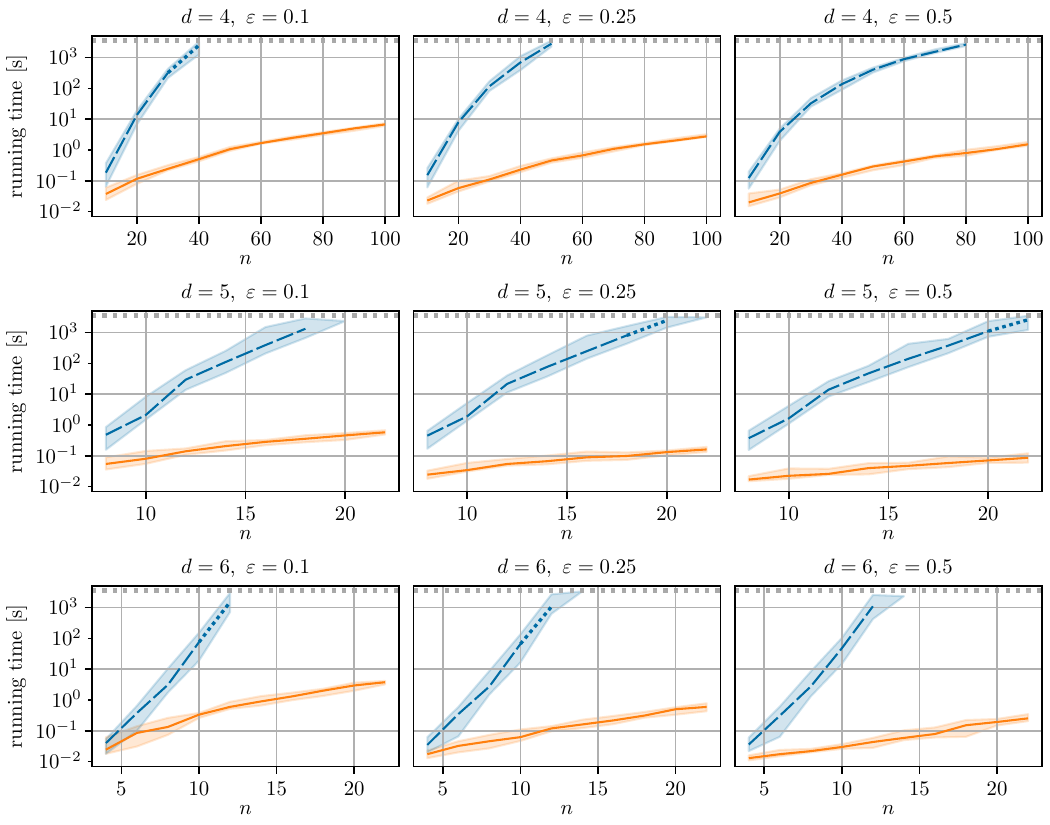}

    \begin{minipage}{0.95\textwidth}
        \footnotesize{%
            The value of~$n$ denotes the number of pairs in an assignment.
            The dotted line at the top marks the time limit of one hour.
            Solid lines are for \iaa{}, dashed lines for \oaa{}.
            The lines always denote the median value, the shaded areas the range of values of instances that finished within the time limit.
            Wherever the \oaa{} line is dotted, not all instances finished within time limit.
            No line indicates that less than half of the instances finished within the time limit.}
    \end{minipage}
\end{figure}

\smallskip

We plot all three metrics for TSP in Figure~\ref{fig::study::tsp} and for $3$-objective AP in Figure~\ref{fig::study::ap}.
For the KP instances, we omit the plots because \iaa{} behaves very similar on all these instances.
Over all KP instances, the cardinality of~$R$ from \iaa{} is at most~$5$ for $\varepsilon=0.1$, at most~$3$ for $\varepsilon=0.25$, and always~$1$ for $\varepsilon=0.5$.
In contrast, the number of solutions that \oaa{} finds grows with the size of the instances.
For KP instances with less than $100$~items, both algorithms terminate effectively immediately, and for instances with $100$ or more items, \iaa{} always terminates faster than \oaa{}.
For the other instance types, the number of solutions output by \iaa{} increases with the instance size, albeit far slower than it does with \oaa{}.

Over all instances classes, a general pattern emerges:
\iaa{} runs much faster and finds far fewer solutions than \oaa{}.
However, at the same time, the $\varepsilon$-convex indicator for \iaa{} is larger than the $\varepsilon$-convex indicator for \oaa{}.
This can be seen most clearly for the $3$-objective AP instances in Figure~\ref{fig::study::ap}.
For these instances, the weighted sum scalarizations are solved exactly.
The $\varepsilon$-convex indicator for \oaa{} is then always close to~$1.0$, even if the required approximation factor is far larger.
In contrast, the $\varepsilon$-convex indicator for \iaa{} is rather close to the required approximation factor.

\smallskip

In general, the findings suggest that the margin allowed by the approximation factor is used more effectively by \iaa{} than by \oaa{}.
This allows \iaa{} to find convex approximation sets faster than \oaa{}, at the cost of a worse $\varepsilon$-convex indicator.
For a (convex) approximation algorithm, this is the desired behavior:
We are willing to sacrifice accuracy to obtain solution sets with faster running times.

\smallskip

For AP instances with more than three objectives, Figure~\ref{fig::study::apHigher} shows that \iaa{} scales a lot better with the number of objectives than \oaa{}.
Often, \oaa{} runs into the time limit of one hour.
The other two metrics show similar results as in the $3$-objective case, so we refrain from plotting them.

\smallskip

In the second part of our study, we consider \iaa{} and \oaa{} for finding \emph{representations}.
A representation is a low-cardinality set of solutions such that its images represent the entire non-dominated set with sufficient \emph{coverage} and \emph{uniformity}.
For an in-depth introduction, we refer to~\citet{sayin2000MeasuringQualityDiscrete}.
Although similar in concept, the notion of (convex) approximation has a different purpose:
It is used to develop polynomial-time algorithms with a provable guarantee on the error.
Nonetheless, we are interested in the quality of the outcome sets of \iaa{} and \oaa{} as representations.%

This part of our computational study follows the setup developed in~\citet{sayin2024SupportedNondominatedPoints}, where the quality of the non-dominated extreme points as representations is measured using four metrics: \emph{coverage error} (CE), \emph{median error} (ME), \emph{hypervolume ratio} (HVR), and \emph{range ration} (RR).
All four metrics are discussed in-depth in~\citet{sayin2024SupportedNondominatedPoints}, and we provide formal definitions in Appendix~\ref{appendix::sayinmetrics}.
As instances, we use the 3-objective AP and KP instances from~\citet{kirlik.sayin2014NewAlgorithmGenerating} since the non-dominated sets for these instances are known (see \href{https://github.com/aritrasep/Modolib.jl/}{\texttt{github.com/aritrasep/Modolib.jl}}).%

Selected results are listed in Table~\ref{table::study::metrics}.
For $\varepsilon=0.1$, \oaa{} performs better than \iaa{} on all instances.
This is hardly surprising:
As seen in the first part of our study, \oaa{} overestimates the approximation error and finds more solutions than \iaa{}.
For all four metrics, a higher number of solutions improves the quality.
However, if~$\varepsilon$ is lowered to $0.01$, \iaa{} still runs faster than \oaa{} with $\varepsilon=0.1$, and often achieves comparable results in its representation quality.
This indicates that \oaa{} computes better representations for a fixed value of~$\varepsilon$, while \iaa{} allows better control on the trade-off between running time and solution quality, for both approximation and representation.%

\smallskip

We conclude with a remark on the results from the study in \citet{helfrich.ruzika.ea2024EfficientlyConstructing}.
While performing very similar on a qualitative level, our implementation of \oaa{} finds somewhat fewer solutions than the implementation from \citet{helfrich.ruzika.ea2024EfficientlyConstructing}.
The main reasons seem to be differently performing vertex enumerations.
We use the cdd library, while qhull \citep{barber.dobkin.ea1996QuickhullAlgorithm} is used in \citet{helfrich.ruzika.ea2024EfficientlyConstructing}.
Most practical implementations of vertex enumerations that use fixed precision arithmetics have to deal with numerical inconsistencies (see, e.g., Section~1 in \citet{lohne2023ApproximateVertex}).
Depending on how an implementation handles such inconsistencies, the set of computed vertices might differ from the set computed by another implementation.
We still opt to use cdd with fixed precision arithmetics, for two reasons:
First, fixed precision arithmetics are much faster than exact arithmetics in practice.
Second, the empirical evidence from our study suggests that both \iaa{} and \oaa{} work correctly as convex approximation algorithms when using cdd.

\begin{sidewaystable}
\centering
    \caption{Representation quality metrics for \iaa{} and \oaa{}.\label{table::study::metrics}}
{
    \rowcolors{2}{lightgray}{white}
    \begin{tabular}{
        l
        @{\hskip 1em}
        r
        >{\hspace{2.3em}}c
        *{3}{r}
        >{\hspace{2.3em}}c
        *{3}{r}
        >{\hspace{2.3em}}c
        *{3}{r}
        >{\hspace{2.3em}}c
        *{3}{r}
    }
\toprule
\multirow{2}{*}{} & 
\multirow{2}{*}{$n$} & &
\multicolumn{3}{c}{CE} & &
\multicolumn{3}{c}{ME} & &
\multicolumn{3}{c}{HVR} & &
\multicolumn{3}{c}{RR} \\
\cmidrule(lr){4-6} \cmidrule(lr){8-10} \cmidrule(lr){12-14} \cmidrule(lr){16-18}
 & & & \iaa.01 & \iaa.1 & \oaa.1 & & \iaa.01 & \iaa.1 & \oaa.1 & & \iaa.01 & \iaa.1 & \oaa.1 & & \iaa.01 & \iaa.1 & \oaa.1 \\
\midrule
	\cellcolor{white} & 10 & & $.259$ & $.396$ & $.242$ & & $.087$ & $.173$ & $.071$ & & $.944$ & $.775$ & $.967$ & & $.954$ & $.901$ & $.947$ \\
	\cellcolor{white} & 20 & & $.166$ & $.303$ & $.136$ & & $.057$ & $.137$ & $.040$ & & $.944$ & $.776$ & $.971$ & & $.947$ & $.889$ & $.951$ \\
	\cellcolor{white} & 30 & & $.161$ & $.304$ & $.140$ & & $.049$ & $.128$ & $.028$ & & $.946$ & $.792$ & $.977$ & & $.960$ & $.927$ & $.962$ \\
	\cellcolor{white} & 40 & & $.135$ & $.290$ & $.107$ & & $.044$ & $.121$ & $.022$ & & $.949$ & $.803$ & $.981$ & & $.961$ & $.927$ & $.964$ \\
	\multirow{-5}{*}{\cellcolor{white}AP} & 50 & & $.136$ & $.285$ & $.100$ & & $.041$ & $.113$ & $.019$ & & $.953$ & $.822$ & $.984$ & & $.978$ & $.949$ & $.970$ \\
\midrule
	\cellcolor{white} & 20 & & $.383$ & $.625$ & $.308$ & & $.178$ & $.314$ & $.149$ & & $.750$ & $.450$ & $.797$ & & $.998$ & $.736$ & $.030$ \\
	\cellcolor{white} & 40 & & $.377$ & $.647$ & $.249$ & & $.139$ & $.268$ & $.100$ & & $.755$ & $.410$ & $.807$ & & $.976$ & $.584$ & $.988$ \\
	\cellcolor{white} & 60 & & $.307$ & $.658$ & $.206$ & & $.124$ & $.252$ & $.068$ & & $.749$ & $.432$ & $.860$ & & $.937$ & $.640$ & $.956$ \\
	\cellcolor{white} & 80 & & $.309$ & $.629$ & $.166$ & & $.122$ & $.243$ & $.057$ & & $.752$ & $.446$ & $.886$ & & $.962$ & $.549$ & $.975$ \\
	\multirow{-5}{*}{\cellcolor{white}KP} & 100 & & $.279$ & $.630$ & $.157$ & & $.107$ & $.238$ & $.051$ & & $.763$ & $.412$ & $.892$ & & $.945$ & $.590$ & $.981$ \\
\bottomrule
\end{tabular}}

\smallskip

\begin{minipage}{0.84\textwidth}
    \footnotesize
    Selected results for the representation quality of \iaa{} with $\varepsilon=0.01$ (\iaa{}.01), and \iaa{} and \oaa{} with $\varepsilon=0.1$ (\iaa{}.1 and \oaa{}.1, respectively).
For CE and ME, values close to zero indicate a good representation, while for HVR and RR, values close to one indicate a good representation.
\end{minipage}

\end{sidewaystable}

\section{Conclusion}\label{sec::conclusion}

\enlargethispage{\baselineskip}
This paper presents a new convex approximation algorithm for MOMILPs\@.
It works in a grid-agnostic fashion, which distinguishes it from all previous general convex approximation algorithms.
The only requirement is that a plane separating oracle is at hand.
In the case that no such oracle is available, Algorithm~\ref{algo::innerapx} can use approximation algorithms for weighted sum scalarizations to construct a similar oracle instead.
On a practical set of problem instances, Algorithm~\ref{algo::innerapx} consistently runs faster than the state-of-the art convex approximation algorithm from \citet{helfrich.ruzika.ea2024EfficientlyConstructing}.

In addition to multi-objective optimization, Algorithm~\ref{algo::innerapx} is also relevant for parametric optimization.
Every convex approximation algorithm is likewise a parametric approximation algorithm, and vice versa~\citep{helfrich.herzel.ea2022ApproximationAlgorithmGeneral}.
Notably, this correspondence extends even to non-convex and combinatorial parametric optimization problems.
Hence, Algorithm~\ref{algo::innerapx} extends the toolbox for practitioners in the field of parametric optimization.

In~\citet{csirmaz2021InnerApproximationAlgorithm}, an outer approximation algorithm is developed alongside the inner approximation algorithm.
We remark that this outer approximation algorithm can be adapted similarly to the inner approximation algorithm so that it runs in polynomial time.
However, constructing practical oracles for the outer approximation algorithm appears to be more complicated than for the inner approximation algorithm (see \citet{bokler.parragh.ea2024OuterApproximationAlgorithm}).
Furthermore, computational studies in \citet{hamel.lohne.ea2014BensonTypeAlgorithms} and \citet{bokler.parragh.ea2024OuterApproximationAlgorithm} indicate that, for many practical instances, the inner approximation algorithm runs faster than the outer approximation algorithm.
For this reason, we omit a discussion of the outer approximation algorithm.

\AtNextBibliography{\small\singlespacing}
\setlength\bibitemsep{0.3\baselineskip}
\printbibliography[title={References},heading={bibintoc}]

\appendix

\section{Generalized Setting}\label{sec::innerapx::generalization}

We outline shortly how to transfer Algorithm~\ref{algo::innerapx} into a more general setting for MOMILP approximation.
This includes, among others, approximating maximization problems.
The new setting consists of two generalizations:

First, instead of all objectives being minimization objectives, each objective can now be either a minimization or a maximization objective.
More formally, let $\opt_{\min},\opt_{\max}\subseteq\mathbb{N}$ form a partition of the index set~$[d]$.
For each~$i\in[d]$, the $i$th objective is then $\min_{x\in X} f_i(x)$ if $i\in\opt_{\min}$, and $\max_{x\in X} f_i(x)$ if $i\in\opt_{\max}$.
The notion of domination also changes accordingly.
Instead of the domination cone being~$\minCone$, the domination cone is now defined as the cone $\genCone \coloneq \cone \left(\left\{e_i : i\in\opt_{\min}\right\}\cup\left\{-e_i : i\in\opt_{\max}\right\}\right)$,
where~$e_i$ denotes the $i$th unit vector of the $d$-dimensional Euclidean space.
For two points $u,v\in \mathbb{R}^{\dobs}$, $u$ dominates~$v$ if $v\in \{u\}+\genCone$ and $u\neq v$.
The assumptions that $X\neq\emptyset$ and $Y\subseteq\mathbb{R}_{\geq0}$ still apply.
Furthermore, for each $i\in\opt_{\max}$, the problem $\max_{x\in X} f_i(x)$ is assumed to be bounded.

\smallskip

Second, instead of $1+\varepsilon$ being a single scalar approximation factor, each objective gets assigned its separate approximation factor.
This allows to vary the precision that is required in each objective.
Let $\varepsilon\in\mathbb{R}_{\geq0}^\dobs$ with $0<\varepsilon_i$ for $i\in\opt_{\min}$ and $0<\varepsilon_i<1$ for $i\in\opt_{\max}$.
Then, let $\Eapx\in\mathbb{R}^{\dobs\times\dobs}$ be a diagonal matrix where, for~$i\in[\dobs]$, $\Eapx_{ii}=1+\varepsilon_i$ if $i\in\opt_{\min}$ and $\Eapx_{ii}=1-\varepsilon_i$ if $i\in\opt_{\max}$.
For two points $u,v\in\mathbb{R}^\dobs$, $u$ \emph{$\Eapx$-approximates}~$v$ if $\Eapx v \in \{u\} + C$.
\emph{$\Eapx$-convex approximation sets} can then be defined analogously to Definition~\ref{def::prelims::convexApx}.

Approximation factors that differ in each objective have a significant impact on the construction of~$\subdi$.
For every~$i\in[\dobs]$, the side-lengths of the hyperrectangles in the $i$th dimension are determined by the exponential function $(1+\varepsilon_i)^k$ if $i\in\opt_{\min}$ and $(1-\varepsilon_i)^{-k}$  if $i\in\opt_{\max}$.
The further $\varepsilon_i$ is away from~$0$, the larger the side-lengths become.
This, in turn, means that fewer hyperrectangles are needed until~$\UB$ (from Definition~\ref{def::prelims::grid}) is covered in the $i$th dimension.

\smallskip

Transferring Algorithm~\ref{algo::innerapx} to the generalized setting requires only minor changes.
In Step~2 and Step~9 of Algorithm~\ref{algo::innerapx}, $\genCone$ is used instead of~$\minCone$ as the recession cone of $\apprii{k}$.
In Step~5, instead of the halfspace~$H_{1+\varepsilon}$, the halfspace $H_\Eapx=\{z\in\mathbb{R}^\dobs : \Eapx w^\T z \geq c\}$ is constructed and used in Step~6.
All our results on the running time and approximation guarantee of Algorithm~\ref{algo::innerapx} then still hold, in particular, Theorems~\ref{theorem::innerapx::Risconvexapx} and~\ref{theorem::innerapx::runningTime}.
The proofs all work analogously to the proofs provided in the previous sections.

\section{Formal Definitions of CE, ME, HVR, and RR.}\label{appendix::sayinmetrics}

All definitions are from~\cite{sayin2024SupportedNondominatedPoints}.
Let $R\subseteq \solfeas$ be a representation for a given multi-objective optimization problem with non-dominated set~$\solnd$.
For $i\in[\dobs]$, we define 
\[\omega_i\coloneqq\frac{1}{\max_{y\in\solnd} y_i - \min_{y\in\solnd} y_i}
\]
and $\dist(y^1,y^2)\coloneqq\max_{i\in[\dobs]} \omega_i\left|y_i^1-y_i^2\right|$.
Then, the \emph{coverage error}~CE is defined as
\(
\text{CE} \coloneqq \max_{y\in\solnd} \min_{r\in R} \dist(y,f(r))
\)
and the \emph{median error}~ME is defined as
\(
\text{ME} \coloneqq \operatorname{median}_{y\in\solnd} \min_{r\in R} \dist(y,r)
\).%

For a set $Y'\subseteq\solnd$, the \emph{hypervolume indicator} $\text{HV}(Y')$ is the volume of the area that is dominated by the points in $Y'$ and bounded by the reference point $p\in\mathbb{R}^d$ with $p_i=\max_{y\in\solnd}y_i + 1$ for every $i\in[\dobs]$.
The \emph{hypervolume ratio}~HVR is then defined as $\text{HVR}\coloneqq\nicefrac{\text{HV}(R)}{\text{HV}(\solnd)}$.%

The \emph{range ratio}~RR is defined as
\[
\text{RR} \coloneqq \operatorname{mean}_{i\in[\dobs]}\frac{\max_{y\in R} y_i - \min_{y\in R} y_i}{\max_{y\in \solnd} y_i - \min_{y\in \solnd} y_i}.
\]

\end{document}